\newlength{\rulebreite}
\def\timesover#1#2#3{\ \xymatrix@1@=0pt@M=0pt{ _{#1}&\times&_{#2} \\& ^{#3}&}\ }
\def\otimesover#1#2#3{\ \xymatrix@1@=0pt@M=0pt{ _{#1}&\otimes&_{#2} \\& ^{#3}&}\ }
\theoremstyle{plain}
\newtheorem{thm}{Theorem}
\newtheorem{lem}[thm]{Lemma}
\newtheorem{cor}[thm]{Corollary}
\newtheorem{prop}[thm]{Proposition}
\theoremstyle{definition}
\newtheorem{defn}[thm]{Definition}
\newtheorem{rmk}[thm]{Remark}
\newtheorem{rmks}[thm]{Remarks}
\numberwithin{thm}{section}
\numberwithin{equation}{section}
\newcommand{\surj}{\twoheadrightarrow}
\newcommand{\Pic}{{\rm Pic}}
\newcommand{\Spec}{{\rm Spec \,}}
\newcommand{\sD}{{\mathcal D}}
\newcommand{\sL}{{\mathcal L}}
\newcommand{\sO}{{\mathcal O}}
\newcommand{\C}{{\mathbb C}}
\newcommand{\E}{{\mathbb E}}
\newcommand{\F}{{\mathbb F}}
\newcommand{\N}{{\mathbb N}}
\newcommand{\Q}{{\mathbb Q}}
\newcommand{\Z}{{\mathbb Z}}
\newcommand{\et}{{\acute{e}t}}
\begin{document}

\title[Stratified bundles]{Simply connected projective manifolds in characteristic $p>0$ have no nontrivial stratified bundles}
\author{H\'el\`ene Esnault}
\address{
Universit\"at Duisburg-Essen, Mathematik, 45117 Essen, Germany}
\email{esnault@uni-due.de}
\author{Vikram Mehta}
\address{Mathematics, Tata Institute, Homi Bhabha Road, Mumbai 400005, India}
\email{vikram@math.tifr.res.in}
\date{ March 26, 2010}
\thanks{The first author is supported by  the DFG Leibniz Preis, the SFB/TR45 and the ERC Advanced Grant 226257}
\begin{abstract} We show that 
simply connected projective manifolds in characteristic $p>0$ have no nontrivial stratified bundles. This gives a positive answer to a conjecture by D. Gieseker (1975). The proof uses Hrushovski's theorem on periodic points.

\end{abstract}
\maketitle
\section{Introduction}
Let $X$ be a smooth complex variety.
 The category of bundles with integrable connections on $X$
is the full subcategory of the category  of coherent $\sD_X$-modules which are $\sO_X$-coherent as well. 
It  is a $\C$-linear abelian rigid category. If $X$ is projective or if we restrict to connections which are regular singular at infinity, then it is equivalent by the Riemann-Hilbert correspondence to the $\C$-linear abelian rigid category of local systems of complex vector spaces (\cite{Del}).  Upon neutralizing those categories by the choice of a point $x\in X(\C)$, the Riemann-Hilbert correspondence translates via the Tannaka formalism into an isomorphism between the proalgebraic completion of the topological fundamental group $\pi_1^{\rm top}(X,x)$  and the Tannaka pro-algebraic group $\pi_1^{{\rm strat}}(X,x)$ of flat bundles. Malcev (\cite{Mal}) and Grothendieck (\cite{Gr}) showed  that if the \'etale fundamental group 
$\pi_1^{{\rm \et}}(X,x)$ is trivial, that is if $X$ does not have any nontrivial connected finite \'etale covering, then $\pi_1^{{\rm strat}}(X,x)$ is trivial as well, thus there are no nontrivial flat bundles. The proof has nothing to do with flat bundles, but with the fact that $\pi_1^{{\rm top}}(X,x)$ is an abstract group of finite type, and that, as a consequence of  the Riemann existence theorem,   $\pi_1^{{\rm \et}}(X,x)$ is its profinite completion. The theorem says that if the profinite completion of a group of finite type is trivial, so is its proalgebraic completion.

Let $X$ now be a smooth  variety defined over a perfect field $k$ of characteristic $p>0$. The  full subcategory of the category  of coherent $\sD_X$-modules which are $\sO_X$-coherent is again a $k$-linear rigid tensor 
category. Katz shows \cite[Theorem~1.3]{G} that it is equivalent to the category of objects
$E=(E_n, \sigma_n)_{n\in \N}$ where $E_n$ is a vector bundle,
$\sigma_n: F^*E_{n+1}\simeq E_n$ is a $\sO_X$-linear isomorphism, and where the morphisms respect all the structures. Let us call it the category 
${\sf Strat}(X)$ of stratified bundles. One neutralizes the category via the choice of a rational point $i_x: x\to X(k)$, defining the functor $\omega_x: {\sf Strat}(X)\to {\sf Vec}_k, \ \omega_x(E)=i_x^*E_{0}$. This defines  the pro-algebraic group $\pi_1^{{\rm strat}}(X,x)={\rm Aut}^{\otimes }(\omega_x)$. 

For a rational point $x\in X(k)$, we denote by 
 $\bar x$ a geometric point above it.
The purpose of this article is to show, when $X$ is projective, the analog in characteristic $p>0$ of Malcev-Grothendieck theorem  (see Theorem \ref{thm3.15}):
\begin{thm} \label{thm1.1}
 Let $X$ be a smooth connected projective variety defined over a perfect field $k$ of characteristic $p>0$. Let $\bar x\in X $ be a geometric point. If $\pi_1^{{\rm \et}}(X \otimes_k \bar k,\bar x)=\{1\}$, there are no nontrivial stratified bundles. 
\end{thm}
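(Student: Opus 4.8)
The plan is to reduce, in two stages, to a situation over $\bar\F_p$ where one can run a dynamical argument on moduli, and then to invoke Hrushovski's theorem on periodic points of correspondences.

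\emph{Reductions.} A nontrivial stratified bundle stays nontrivial after $X\rightsquigarrow X\otimes_k\bar k$, so assume $k=\bar k$. By Katz's equivalence a stratified bundle is a coherent $\sD_X$-module, hence of finite presentation, so $X$ together with a given object $E$ of ${\sf Strat}(X)$ is defined over a subfield $k_0\subset\bar k$ finitely generated over $\F_p$; spreading out over an integral affine $\F_p$-scheme with function field $k_0$ and specializing at a closed point, Grothendieck's specialization theorem for $\pi_1^{\et}$ and openness of the triviality locus reduce us to $k=\bar\F_p$, where the goal is that every object of ${\sf Strat}(X)$ is a trivial bundle, i.e. $\pi_1^{{\rm strat}}(X,x)=\{1\}$. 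By the Tannakian formalism it then suffices to show (a) every irreducible object is isomorphic to $\mathbf 1=(\sO_X,\id)$ and (b) ${\rm Ext}^1_{{\sf Strat}(X)}(\mathbf 1,\mathbf 1)=0$; for (b) one notes that such an extension is a compatible system of classes $\xi_n\in H^1(X,\sO_X)$ with $F^*\xi_{n+1}=\xi_n$, so each $\xi_n$ lies in $\bigcap_m F^{*m}H^1(X,\sO_X)$, the bijective part of the $p$-linear operator $F^*$, which by Artin--Schreier theory is $H^1_{\et}(X,\F_p)\otimes_{\F_p}\bar\F_p=\Hom_{\mathrm{cont}}(\pi_1^{\et}(X),\F_p)\otimes\bar\F_p=0$.

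\emph{A bounded family.} Let $E=(E_n,\sigma_n)$ be irreducible of rank $r$. From $c_i(E_0)=c_i(F^{n*}E_n)=p^{ni}c_i(E_n)$ one gets $\deg_H E_0=p^n\deg_H E_n$ for all $n$, so $\deg_H E_n=0$, and likewise the numerical Chern classes of the $E_n$ take only finitely many values. Since $\mu_H(F^*\sG)=p\,\mu_H(\sG)$, pulling back the maximal destabilizing subsheaf gives $\mu_{\max}(E_0)=\mu_{\max}(F^{n*}E_n)\ge p^n\mu_{\max}(E_n)\ge 0$, hence $0\le\mu_{\max}(E_n)\le p^{-n}\mu_{\max}(E_0)$; as these slopes lie in a discrete set, $E_n$ is $\mu_H$-semistable for $n\gg 0$. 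Replacing $E$ by the Frobenius pullback $(F^N)^*E=(E_{n+N},\sigma_{n+N})$ — legitimate because $F^*$ is a $\otimes$-autoequivalence of ${\sf Strat}(X)$ preserving irreducibility and the subcategory generated by $E$ — we may assume all $E_n$ are $\mu_H$-semistable of degree $0$ with bounded Chern classes. By Langer's boundedness theorem, $\{E_n:n\ge 0\}$ lies in a bounded family, parametrized (after fixing the discrete invariants) by a suitable Quot scheme $R$ of quotients $\sO_X(-m)^{\oplus N}\surj W$, a quasi-projective $\bar\F_p$-variety.

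\emph{The Frobenius correspondence and Hrushovski.} Frobenius pullback $W\mapsto F^*W$, together with the re-rigidification returning one to a fixed Quot scheme and the ${\rm PGL}_N$-bookkeeping, defines a correspondence $\Phi\subseteq R\times R$ that is dominant and generically finite over each factor (generic finiteness because $F^*W_1\simeq F^*W_2$ forces $W_1\simeq W_2\otimes L$ with $L$ a prime-to-$p$ torsion line bundle, of which there are finitely many). The isomorphisms $\sigma_n$ exhibit $[E_0],[E_1],[E_2],\dots$ as an infinite $\Phi$-orbit traversed backwards; let $Z\subseteq R$ be its Zariski closure, so $\Phi$ restricts to a dominant generically finite self-correspondence of $Z$. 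If $\dim Z\ge 1$, Hrushovski's theorem produces periodic points of $\Phi|_Z$ defined over arbitrarily large finite fields. But a bundle $W$ with $(F^s)^*W\simeq W$ descends, by Lange--Stuhler, to an étale local system, hence to a representation of $\pi_1^{\et}$ of $X$ over a finite field of definition; since $\pi_1^{\et}(X_{\bar\F_p})=\{1\}$, such $W$ is geometrically trivial, $W\simeq\sO_X^{\oplus r}$, a single point over a fixed finite field. This contradiction forces $\dim Z=0$, so the orbit $\{[E_n]\}$ is finite; a finite orbit traversed backwards under a map is a union of cycles, so each $E_n$ is Frobenius-periodic and therefore $E_n\simeq\sO_X^{\oplus r}$ for every $n$. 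Writing the stratification in these trivializations (using $H^0(X,\sO_X)=\bar\F_p$) gives $E\simeq(\sO_X^{\oplus r},A_n)$ with $A_n\in{\rm GL}_r(\bar\F_p)$, and the recursion $P_0=\id$, $P_{n+1}=(P_nA_n)^{(1/p)}$, solvable since $\bar\F_p$ is perfect, trivializes $E$; so $r=1$ and (a) holds.

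\emph{Main obstacle.} The delicate step is the construction and analysis of $\Phi$: making Frobenius pullback into an honest correspondence on a quasi-projective scheme (controlling the twist $m\mapsto pm$ and the passage to the ${\rm PGL}_N$-quotient, so that "periodic point of $\Phi$" genuinely means "$(F^s)^*W\simeq W$"), verifying the dominance and generic-finiteness hypotheses required by Hrushovski's theorem, and extracting the quantitative output (periodic points over arbitrarily large finite fields) — all predicated on Langer's theory to ensure that the underlying bundles $E_n$ are eventually semistable, have bounded invariants, and have slopes tending to $0$.
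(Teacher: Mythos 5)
Your overall strategy is the same as the paper's: force the $E_n$ into a bounded family of slope-zero semistable sheaves, turn Frobenius pullback into a dynamical system on a parameter space, apply Hrushovski to produce Frobenius-periodic bundles, kill those with Lange--Stuhler and $\pi_1^{\et}=\{1\}$, and handle the unipotent part via $\varprojlim_n H^1(X,\sO_X)$ (your ${\rm Ext}^1(\mathbf 1,\mathbf 1)=0$ argument is exactly the paper's Proposition \ref{prop2.4}). But the step you yourself flag as the ``main obstacle'' is a genuine gap, and it is precisely where the paper does real work. On a Quot scheme $R$, the locus of pairs $(q_1,q_2)$ with $W_{q_2}\cong F^*W_{q_1}$ has positive-dimensional fibres over each factor (at least a ${\rm PGL}_N$-orbit's worth), so it is not a generically finite correspondence and Hrushovski's theorem (which needs $\dim\Gamma=\dim Y$ with both projections dominant) does not apply to it; moreover your justification of generic finiteness is false --- $F^*W_1\cong F^*W_2$ does not force $W_1\cong W_2\otimes L$ with $L$ torsion (already for line bundles the correct statement involves $p$-torsion, since $F^*L\cong L^{\otimes p}$, and in higher rank no such statement holds). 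The paper's resolution is a substantive missing ingredient in your write-up: Proposition \ref{prop2.3} uses the socle of $E_n$ and a slope computation to show that, after shifting, $E$ is a successive extension of stratified bundles whose terms are $\mu$-\emph{stable} of slope $0$; one can then work in Langer's coarse moduli scheme $M$ of $\chi$-stable sheaves, where points are honest isomorphism classes, the Verschiebung is a rational \emph{map} $V:M^0\to M$, and Hrushovski is applied to the graph of a power of $V$ restricted to the closure $N$ of the stratification locus (Lemmas \ref{lem3.7}, \ref{lem3.8}, Theorem \ref{thm3.14}). Without the stability reduction there is no such $M$, and without $M$ your $\Phi$ does not satisfy the hypotheses of the theorem you want to invoke.

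Two smaller points. First, your opening reduction to $k=\bar\F_p$ by spreading out and specializing a single stratified bundle is delicate: keeping all the $E_n$ locally free and the bundle nontrivial at a single closed point involves countably many conditions on the base, and over $\F_p$ a countable intersection of dense opens can miss every closed point; the paper avoids this by spreading out the moduli space $N$ instead and proving density of torsion points in the whole model $N_S$ (Theorem \ref{thm3.14}), checking separately that the model of $V$ reduces to the Verschiebung of the fibre (Lemma \ref{lem3.10}). Second, Hrushovski gives points $u$ with $(u,\Phi_q^m(u))\in\Gamma$, not periodic points of your correspondence; to conclude $(F^s)^*W\cong W$ one still needs $\Gamma$ to be the graph of a map near $u$ and the commutation $\Phi_q\circ V=V\circ\Phi_q$ together with $\Phi_q^{mc}(u)=u$ for closed points, as in the proof of Theorem \ref{thm3.14}. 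Both of these are repairable, but the construction of a Hrushovski-admissible self-map via the stable reduction is not a bookkeeping issue --- it is the heart of the argument.
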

\noindent 
This gives a (complete) positive answer to Gieseker's conjecture \cite[p.8]{G}. If $X$ has a rational point $x\in X(k)$, one can rephrase  by saying that under the assumptions of the theorem, $\pi_1^{{\rm strat}}(X,x)=\{1\}$. The theorem implies that if $\pi_1^{{\rm \et}}(X \otimes_k \bar k,\bar x)$ is a finite group, then irreducible stratified bundles come from representations of 
$\pi_1^{{\rm \et}}(X \otimes_k \bar k,\bar x)$ (see Corollary \ref{cor3.16}).\\[.1cm] 
Dos Santos \cite{dS} studied the $k$-pro-group $\pi_1^{{\rm strat}}(X,x)$ when $k=\bar k$.   He showed that all quotients in $GL(\omega_x(E))$ are smooth algebraic groups (\cite[Corollary~12]{dS}, see also 
\cite{Mau}).
In fact, the proof is written only for the finite part, but it applies more generally. This is an important fact pleading in favor of the conjecture. \\[.2cm]
We now describe the philosophy of the proof of Theorem \ref{thm1.1}. Since there is no known  group of finite type which  controls $\pi_1^{{\rm strat}}(X,x)$ and $\pi_1^{{\rm \et}}(X,x)$, it is impossible to adapt Grothendieck's proof. Instead, one can first think of the full subcategory spanned by rank one objects. Over $\bar k$, the maximal abelian quotient $\pi_1^{{\rm ab}}(X,x)$ of $\pi_1^{{\rm \et}}(X,x)$ nearly controls the Picard variety: if $\pi^{{\rm ab}}_1(X, x)=\{1\}$,  then, for any prime $\ell \neq p$,  the $\ell$-adic Tate module $\varprojlim_{n} \Pic^0(X)(\bar k)[\ell^n]$ is trivial, thus $\Pic^\tau(X)$ is finite. So a rank one stratified bundle $L=(L_n, \sigma_n)_{n\in \N}$ on $X\otimes \bar k$ must have the property that for a strictly increasing sequence $n_i, \ i\ge 0$, the line bundles $L_{n_i}$ are all isomorphic, thus all $L_{n_i}$ are fixed by some power of the Frobenius, so define a Kummer covering of $X$, which then has to be trivial by our assumption. This implies that  all the $L_n, \ n\ge 0$ are trivial. Since on $X$ proper, a stratified bundle $E=(E_n, \sigma_n)_{n\in \N}$ is uniquely recognized by the bundles $E_n$ (\cite[Proposition~1.7]{G}), this shows the statement. \\[.1cm]
Thus the rank one case does not rely directly on the Tannaka property
of the category. It rather uses the representability of the Picard functor together with the fact that those bundles with are fixed by a power of the Frobenius  defined \'etale coverings on one hand, and are dense in $\Pic^\tau(X\otimes \bar k)$ on the other. We try to follow the same idea in the higher rank case.\\[.2cm]
We now describe the main steps  of the proof over $k=\bar k$. In order to be able to use moduli of vector bundles, we first reduce the problem to the case where all the underlying bundles  $E_n$ of the stratified bundles $E=(E_n, \sigma_n)_{n\in \N}$ are stable with vanishing Chern classes. Here  we use $\mu$- (or slope) stability with respect to a fix polarization, which is defined by Mumford by the growth of the degree of  subsheaves. 
Brenner-Kaid (\cite[Lemma~2.2]{BK}) show that if $E$ is a stratified bundle, then
$E_n$ is semistable of slope $0$ for $n$ large.
 Using Langer's boundedness \cite[Theorem~4.2]{L}, the authors apply the same argument as for the classical one in rank 1 sketched above, to conclude that if all the $E_n$ are defined over the same finite field $\F_q$ and $\pi^{{\rm \et}}_1(X_{\bar \F_q})=0$, then   all stratified bundles are trivial (\cite[Lemma~2.4]{BK}). 
We show in general that the stratified bundle  $E(n_0)=(E_{n_0+m},\sigma_{n_0+m})_{m\in \N}$, for $n_0$ large enough,  is always a successive extension of stratified bundles  $(U_m, \tau_m)_{m\in \N}$ such that all the $U_m$ are $\mu$-stable.  
This,  together with  dos Santos' theorem \cite[(9)]{dS}, describing with the projective system $\varprojlim_n H^1(X, \sO_X)$ over the Frobenius, the full subcategory of ${\sf Strat}(X)$ spanned by successive extensions of the trivial object by itself, is enough to perform the reduction (see Proposition \ref{prop2.4}). \\[.1cm]
The moduli scheme $M$ of $\chi$-stable torsionfree sheaves of Hilbert polynomial $p_{\sO_X}$ and of rank $r>0$  was constructed by Gieseker in \cite[Theorem~0.3]{G1} in dimension 2 over $k=\bar k$, and by Langer in \cite[Theorem~4.1]{L2} in general.
Here $\chi$-stability is taken with respect to a fix polarization, and is defined by Gieseker \cite[Section~0]{G1} by the growth of the Hilbert polynomial of subsheaves.  
Examples of $\chi$-stable torsionfree sheaves with  Hilbert polynomial $p_{\sO_X}$ are torsionfree sheaves with $\mu=0$ \cite[Lemma~1.2.13]{HL}.\\[.1cm]
While on $\Pic(X)$, the {\it Verschiebung} morphism, which, to a line bundle $L$,  assigns its Frobenius pullback $V(L):=F^*(L)\simeq L^p$, is well defined, on
$M$ it is not.   As is well known \cite[Theorem~1]{Gstable}, even if $E$ is stable, the bundle $F^*(E)$ need not be stable. 
We define 
the sublocus $M^s$ of  $\mu$-stable points $[E]$ for which there is a stratified bundle $(E_n, \sigma_n)_{n\in \N}$ with $E_0\cong E$.
 The bundles $E_n$ are necessarily $\mu$-stable, thus define points $[E_n]\in M^s$. In particular,  $[E]=V([E_1])$ and $M^s$ lies in the image of the sublocus of $M^s$ on which $V$ is well defined. If we assume that  there are nontrivial stratified bundles with $[E_n]\in M$,  this sublocus is not empty. We define $N\subset M$ to be the Zariski closure of $M^s$. We show that the Verschiebung is a rational dominant morphism $\xymatrix{ V: N\ar@{.>}[r] & N}$ (see Lemma \ref{lem3.8}). \\[.1cm]
 On the other hand, there is a smooth affine variety $S$, defined over $\F_p$,
 such that $X$ has a smooth model $X_S\to S$, 
the Frobenius $F_k: \Spec k\to \Spec k$ descends to the absolute Frobenius $F_S: S\to S$, and the absolute Frobenius $F: X\to X$ over $F_k$ descends to the absolute Frobenius $F_{X_S}: X_S\to X_S$ over $F_S$. 
By the representability theorem \cite[Theorem~4.1]{L2}, $M$ has a model $M_S$ with the property that for all morphisms $T\to S$ of $\F_p$-varieties, $M_S\times_S T=M_T$, where $M_T$ is the moduli of stable vector bundles of the same Hilbert polynomial and the same rank $r>1$ on $X_T\to T$, where $X_T=X_S\times_S T$.  We  take $T$ smooth so that $N\subset M$ has a model $N_T\subset M_T$, and its connected components are also defined over $T$.  We further require that $V$ has a model  
$\xymatrix{ V_T: N_T\ar@{.>}[r] & N_T}$. There is a dense open subvariety  $T^0\subset T$ such that for all closed points $t\in T^0$, the reduction $\xymatrix{ V_t: N_t\ar@{.>}[r] & N_t}$
is still defined as a rational map, is dominant, and some power fixes the irreducible components of $N_t$. 
We show that 
$V_t$ is the Verschiebung from $M_t$ restricted to $N_t$ (see Lemma \ref{lem3.10}).
 \\[.1cm]
Replacing $N_t$ by an irreducible component $Y$ say, 
we denote by $\Gamma\subset Y\times_t Y$ the Zariski closure of the graph, where it is defined,  of a power of $V_t$ respecting $Y$. 
One can  apply Hrushovski's fundamental theorem \cite[Corollary~1.2]{H} to find a dense subset of closed points of the shape $(u, \Phi_{q(t)}^m(u))\in \Gamma$, where $\Phi_{q(t)}$ is the geometric Frobenius of $N_t$ raising coordinates to the $q(t)$-th power, and where $\F_{q(t)}$ is the residue field of $t$.   From this and the representability theorem {\it loc. cit.},  one deduces that if $N$ is not empty, $N_t \otimes_{\F_q} \bar{\F}_q $ contains closed  points which are fixed under the Verschiebung (see Theorem \ref{thm3.14}). Since by Grothendieck's specialisation theorem   \cite[Th\'eor\`eme~3.8]{SGA1}, $\pi^{{\rm \et}}_1(X_t\otimes_{\F_q} \bar{\F}_q)=\{1\}$, we conclude that $N_t$ is empty, so thus is $N$. \\[.1cm]
The reduction to the stable case is written in section 2, the proof is performed  in section 3. 
In fact in section 3, we do a bit more. We show that torsion points (see Definition \ref{defn3.12}) are dense in good models of stratified schemes (see Definition \ref{defn3.4} and Theorem \ref{thm3.14}). 
In section 4, we make a few remarks and raise some questions. In particular, if $X$ is quasi-projective, in view to Grothendieck's theorem in characteristic 0, one would expect a relation between the fundamental group 
 $\pi_1^{{\rm \et}}(X\otimes_k \bar k, \bar x)$, and  tame stratified bundles.   Without resolution of singularities and without theory of canonical extension like Deligne's one \cite{Del} in characteristic 0, our method of proof can't be extended to this case. \\[.3cm]
{\it Acknowledgements:} 
The proof of Theorem \ref{thm1.1} relies on Hrushovski's  theorem \cite[Corollary~1.2]{H}. In the literature, the reference for it is an unpublished manuscript of the author dated 1996 and entitled {\it The first order theory of Frobenius of the Frobenius automorphism}. Sometimes, a later reference to Conjecture 2 in {\it Chatzidakis, Z., Hrushovski, E.: Model Theory of Difference Fields, Trans. Am. Math. Soc. 351 (1999), 2997--3071} is given. One of the referees added another unpublished reference {\tt www.math.polytechnique.fr/\~{}giabicani/docs/memoireM2.pdf}.
 Since we do not need an effective version of Hrushovski's theorem, we refer to \cite{H} which is available on the  arXiv server. 

We profoundly thank Ehud Hurshovski for kindly sending us the latest version of this article. We thank Yves Andr\'e, Alexander Beilinson, Ph\`ung H\^o Hai and 
Alex Lubotzky for important discussions  directly or indirectly related to the subject of this article.
More specifically, we acknowledge discussions with Andr\'e on stratifications and liftings to characteristic 0, with Beilinson on $\sD$-modules, with Hai on the Tannaka formalism on the category of stratified bundles,  with Lubotzky on residually finite groups and Margulis rigidity. In addition, we thank Michel Raynaud for a very careful and friendly reading of the first version of our article. He helped us removing inaccuracies in Corollary \ref{cor3.16} and presenting the proof by separating what does not depend on the assumption that $X$ is geometrically simply connected (see  Theorem  \ref{thm3.14}). Furthermore, he directly contributed an illustration of  Theorem \ref{thm3.14} (see 
Proposition \ref{prop3.17}).  We also thank Burt Totaro for pointing out \cite{Mal} to us.

The second author thanks the 
 the SFB/TR45 at the University Duisburg-Essen for hospitality at several occasions during the last years.  
\section{Reduction to the case where the bundles are stable} 
Let $X$ be a smooth connected projective variety defined over a perfect field $k$ of characteristic $p>0$. \\[.1cm]
By Katz' equivalence of category between coherent $\sD_X$-modules which are $\sO_X$-coherent and stratified bundles \cite[Theorem~1.3]{G}, any stratified bundle $E=(E_n,\sigma_n)_{n\in \N}, \ \sigma_n: F^*E_{n+1}\simeq E_n$ of rank $r>0$ is isomorphic as a stratified bundle to one for which for all $n\ge 0$ and all
$ a\ge 0$, $(F^n)^{-1}E_{n+a}\subset E_a$ is  a subsheaf of abelian groups,  which  is a $(F^n)^{-1}\sO_X$-locally free module   of rank $r$.  From now on, we will always take a representative of a stratified bundle which has this property and we will drop the isomorphisms $\sigma_n$ from the notation. \\[.1cm]
Let $\sO_X(1)$ be an ample line bundle. For any bundle $E$ of rank $r>0$, one defines the {\it Hilbert polynomial of $E$ relative to $\sO_X(1)$} by $p_E(m)=\frac{1}{r} \chi\big(X, E(m)\big)\in \Q[m]$ for $m>>0$ large enough so that it is a polynomial, and where $E(m)=E\otimes_{\sO_X} \sO_X(m)$. Recall (\cite[section~0]{G1}) that $E$ is said to be $\chi$-stable ($\chi$-semistable) if for all subsheaves $U\subset E$ one has $p_U<p_E$ ($p_U\le p_E$). Here the order is defined by the values of the polynomials for $n$ large, and the rank of the (necessarily torsionfree) subsheaf $U$ is its generic rank. Recall that $E$ is said to be $\mu$-stable ($\mu$-semistable) if for all subsheaves $U\subset E$ on has $\mu(U)<\mu(E)$ ($\mu(U)\le \mu(E)$).  \\[.1cm]
We use the notation $CH^i(X)$ for the Chow group of codimension $i\ge 0$ cycles and denote by $\cdot$ the cup-product $CH^i(X)\times CH^j(X)\to CH^{i+j}(X)$.  If $X$ is connected and has dimension $d$, we denote by ${\rm deg}: CH^d(X)\to \Z$ the degree homomorphism.\\[.1cm]
With this notation,  the {\it slope} $\mu(E)$ of a torsionfree sheaf $E$ is defined by the formula $\mu(E)= \frac{1}{{\rm rank}(E)}{\rm deg} \big(c_1(E)\cdot \sO_X(1)^{d-1}\big)$, where $d={\rm dim}_k X$.
\begin{lem} \label{lem2.1}
 Let $X$ be a smooth connected projective variety of dimension $d\ge 1$ defined over a perfect field $k$ of characteristic $p>0$. Let $E=(E_n)_{n\in \N}$ be a stratified bundle. For any class $\xi\in CH^i(X)$, for all $n\ge 0$, and all $0\le i \le d-1$, one has ${\rm deg}\big(\xi\cdot \gamma_{d-i}(E_n)\big)=0 $, where $\gamma_{d-i}$ is any homogeneous polynomial  of degree $d-i$ with rational coefficients in the Chern classes.  
\end{lem}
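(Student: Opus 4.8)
The plan is to extract everything from the single relation $E_n\cong F^*E_{n+1}$, where $F\colon X\to X$ is the absolute Frobenius, by combining it with the way $F^*$ acts on cycle classes.

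First I would recall the effect of $F^*$ on the Chow ring of a smooth variety: $F^*\colon CH^j(X)\to CH^j(X)$ is multiplication by $p^j$. For $j=1$ this amounts to $F^*L\cong L^{\otimes p}$ for line bundles $L$, i.e.\ multiplication by $p$ on $\Pic(X)=CH^1(X)$; in general one may either invoke the splitting principle (the Chern roots of $F^*G$ are $p$ times those of $G$, so $c_j(F^*G)=p^jc_j(G)$), or compute directly that for a subvariety $Z$ of codimension $j$ the local ring $\mathcal O_{X,\eta_Z}$ is regular of dimension $j$ and $\mathcal O_{X,\eta_Z}/\mathfrak m^{[p]}$ has length $p^j$, whence $F^*[Z]=p^j[Z]$. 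Consequently, for a weighted-homogeneous polynomial $\gamma_{d-i}$ of degree $d-i$ in the Chern classes (with $c_j$ of weight $j$), which defines a class in $CH^{d-i}(X)_{\Q}$, one gets $\gamma_{d-i}(F^*G)=p^{\,d-i}\gamma_{d-i}(G)$.

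Applying this with $G=E_{n+1}$ and using $E_n\cong F^*E_{n+1}$ gives $\gamma_{d-i}(E_n)=p^{\,d-i}\gamma_{d-i}(E_{n+1})$ in $CH^{d-i}(X)_{\Q}$, and iterating,
\[
\gamma_{d-i}(E_n)=p^{\,m(d-i)}\,\gamma_{d-i}(E_{n+m})\quad\text{in }CH^{d-i}(X)_{\Q},\qquad m\ge 0 .
\]
Intersecting with the integral class $\xi\in CH^i(X)$ and taking degrees yields $\deg\big(\xi\cdot\gamma_{d-i}(E_n)\big)=p^{\,m(d-i)}\deg\big(\xi\cdot\gamma_{d-i}(E_{n+m})\big)$ for every $m$. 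If $N$ clears the denominators of the coefficients of $\gamma_{d-i}$, then $N\gamma_{d-i}(E_{n+m})$ is an integral class, so $N\deg\big(\xi\cdot\gamma_{d-i}(E_{n+m})\big)\in\Z$; hence the fixed rational number $N\deg\big(\xi\cdot\gamma_{d-i}(E_n)\big)$ is an integer divisible by $p^{\,m(d-i)}$ for all $m\ge 0$. Since $i\le d-1$ forces $d-i\ge 1$, these exponents are unbounded, so this integer must be $0$, which is the assertion.

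The proof is short, and the one point that really needs justification is the first step — that $F^*$ scales $CH^j(X)$ by $p^j$; everything afterwards is just the remark that the resulting infinite $p$-divisibility of the bounded-denominator rational number $\deg\big(\xi\cdot\gamma_{d-i}(E_n)\big)$ forces it to vanish. The hypothesis $i\le d-1$ is used precisely to guarantee $d-i\ge 1$ (for $i=d$, $\gamma_0$ is a nonzero constant and the statement is false), and no input from moduli theory or stability is needed here.
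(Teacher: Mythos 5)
Your proposal is correct and follows essentially the same route as the paper: the identity $\gamma_{d-i}(F^*G)=p^{d-i}\gamma_{d-i}(G)$ combined with $E_n\cong (F^m)^*E_{n+m}$ shows that the bounded-denominator rational number ${\rm deg}\big(\xi\cdot\gamma_{d-i}(E_n)\big)$ is divisible by arbitrarily high powers of $p$ (using $d-i\ge 1$), hence vanishes. Your write-up merely spells out in more detail the justification of the Chern-class scaling and the role of the hypothesis $i\le d-1$, both of which the paper leaves implicit.
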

 \begin{proof}
  One has ${\rm deg}\big( \xi\cdot c_{d-i}(E_n)\big)={\rm deg}\big( \xi\cdot c_{d-i}((F^a)^*E_{n+a})\big)$ for all $a\ge 0$. On the other hand, for any bundle $E$, one has $\gamma_{d-i}(F^*E)=p^{d-i} \gamma_{d-i}(E)$. Thus ${\rm deg}\big( \xi\cdot c_{d-i}(E_n)\big)\in \frac{1}{D}\Z$ is infinitely $p$-divisible, where $D$ is the bounded denominator $\in \N\setminus \{0\}$. Thus this is 0.
 \end{proof}
\begin{cor}  \label{cor2.2}
 Let $X$ be a smooth connected projective variety of dimension $d\ge 1$ defined over a perfect field $k$ of characteristic $p>0$. Let $E=(E_n)_{n\in \N}$ be a stratified bundle. Then $p_{E_n}=p_{\sO_X}$ and $\mu(E_n)=0$ for any $n\ge 0$. 
\end{cor}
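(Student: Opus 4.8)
The plan is to deduce both assertions directly from Lemma~\ref{lem2.1}, using in addition the Hirzebruch--Riemann--Roch formula, which is available on a smooth projective variety over any field as a special case of Grothendieck--Riemann--Roch applied to the structure morphism $X\to\Spec k$. The slope is immediate: by definition
\[
\mu(E_n)=\frac{1}{\rank(E_n)}{\rm deg}\big(c_1(E_n)\cdot \sO_X(1)^{d-1}\big),
\]
so I would apply Lemma~\ref{lem2.1} with $i=d-1$, with $\xi=\sO_X(1)^{d-1}\in CH^{d-1}(X)$, and with $\gamma_{d-i}=\gamma_1=c_1$, a homogeneous polynomial of degree $1=d-i$ in the Chern classes. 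This gives ${\rm deg}\big(c_1(E_n)\cdot\sO_X(1)^{d-1}\big)=0$, hence $\mu(E_n)=0$ for every $n\ge 0$.

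For the Hilbert polynomial, fix $m\gg 0$ and write, by Hirzebruch--Riemann--Roch,
\[
r\,p_{E_n}(m)=\chi\big(X,E_n(m)\big)={\rm deg}\Big(\big({\rm ch}(E_n)\cdot e^{m\,c_1(\sO_X(1))}\cdot{\rm td}(X)\big)_d\Big),
\]
where $r=\rank(E_n)$ and $(-)_d$ is the codimension-$d$ component. Decompose ${\rm ch}(E_n)=\sum_{j=0}^{d}{\rm ch}_j(E_n)$ into its homogeneous pieces, ${\rm ch}_j(E_n)\in CH^j(X)\otimes\Q$ being a fixed degree-$j$ polynomial in the Chern classes of $E_n$ with ${\rm ch}_0(E_n)=r$. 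The $j=0$ summand contributes $r\,{\rm deg}\big((e^{m\,c_1(\sO_X(1))}{\rm td}(X))_d\big)=r\,\chi(X,\sO_X(m))=r\,p_{\sO_X}(m)$. For $1\le j\le d$, the codimension-$d$ component of ${\rm ch}_j(E_n)\cdot e^{m\,c_1(\sO_X(1))}{\rm td}(X)$ equals ${\rm ch}_j(E_n)\cdot\xi_j$ with $\xi_j:=\big(e^{m\,c_1(\sO_X(1))}{\rm td}(X)\big)_{d-j}\in CH^{d-j}(X)\otimes\Q$; since $0\le d-j\le d-1$ and ${\rm ch}_j$ is homogeneous of degree $j=d-(d-j)$ in the Chern classes, Lemma~\ref{lem2.1} (applied after clearing the denominator in $\xi_j$, the map ${\rm deg}$ being $\Q$-linear) yields ${\rm deg}\big({\rm ch}_j(E_n)\cdot\xi_j\big)=0$. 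Hence all summands with $j\ge 1$ vanish and $r\,p_{E_n}(m)=r\,p_{\sO_X}(m)$ for all $m\gg 0$, that is, $p_{E_n}=p_{\sO_X}$.

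I do not expect a genuine obstacle here: the argument is bookkeeping of Chern-character terms, reducing everything to the single input Lemma~\ref{lem2.1}. The only point worth a remark is that Lemma~\ref{lem2.1} is stated for integral classes $\xi$ whereas the $\xi_j$ produced by the Todd class are rational; this is harmless because ${\rm deg}$ is $\Q$-linear, and one may equally well run the whole argument in $CH^\bullet(X)\otimes\Q$, where Lemma~\ref{lem2.1} holds verbatim by the same infinite $p$-divisibility argument used in its proof.
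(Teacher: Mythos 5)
Your proposal is correct and is essentially the argument the paper intends: the corollary is stated as an immediate consequence of Lemma~\ref{lem2.1}, with the slope obtained by taking $\xi=\sO_X(1)^{d-1}$ and $\gamma_1=c_1$, and the Hilbert polynomial obtained by Riemann--Roch after observing that every positive-degree Chern character component of $E_n$ pairs to zero. Your remark about rational versus integral classes is a reasonable bit of care, and is harmless exactly as you say.
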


\begin{prop} \label{prop2.3}
  Let $X$ be a smooth connected projective variety defined over a perfect field $k$ of characteristic $p>0$. For any stratified bundle $E=(E_n)_{n\in \N}$, there is a $n_0\in \N$ 
 such that the stratified bundle $E(n_0)=(E_n)_{n\ge n_0, n\in \N}$ is a successive extension of stratified bundles $U=(U_n)_{n\in \N}$ with the property that all $U_n$ for $ n\in \N$ are $\mu$-stable of slope $0$. In particular, all $U_n$ for $n\in \N$ are  $\chi$-stable bundles of Hilbert polynomial $p_{\sO_X}$. 
\end{prop}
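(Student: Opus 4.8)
The plan is to pass to the abelian structure of ${\sf Strat}(X)$, reduce by a composition series to the case of a \emph{simple} stratified bundle, and then use simplicity together with the semistability theorem of Brenner--Kaid and Langer's boundedness.

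\emph{Step 1: reduction to simple objects.} By Katz' equivalence of ${\sf Strat}(X)$ with the category of $\sO_X$-coherent $\sD_X$-modules, ${\sf Strat}(X)$ is abelian; in a short exact sequence $0\to U\to E\to Q\to 0$ in it, all the $U_n,E_n,Q_n$ are locally free and $F^*$-compatible, so $\rank(E_n)=\rank(U_n)+\rank(Q_n)$ independently of $n$. Hence rank is additive on short exact sequences, a nonzero object has positive rank, and therefore every stratified bundle has finite length in ${\sf Strat}(X)$; in particular $E$ has a composition series $0=E^{(0)}\subset\cdots\subset E^{(\ell)}=E$ in ${\sf Strat}(X)$ with simple quotients $V^{(i)}=E^{(i)}/E^{(i-1)}$. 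Each $V^{(i)}=(V^{(i)}_n)_{n\in\N}$ is again a stratified bundle, so by Corollary \ref{cor2.2} every $V^{(i)}_n$ has slope $0$ and Hilbert polynomial $p_{\sO_X}$, hence is $\chi$-stable once it is $\mu$-stable. So it suffices to show: for every simple $V\in{\sf Strat}(X)$ there is an integer $N(V)$ with $V_n$ $\mu$-stable for $n\ge N(V)$; indeed $n_0:=\max_i N(V^{(i)})$ then works, since restricting the composition series to degrees $\ge n_0$ exhibits $E(n_0)$ as a successive extension of the stratified bundles $(V^{(i)}_{n_0+m})_{m\in\N}$, all of whose members are $\mu$-stable.

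\emph{Step 2: simple objects have $\mu$-stable members eventually.} Let $V=(V_n)$ be simple. By Brenner--Kaid \cite[Lemma~2.2]{BK} together with Langer's boundedness \cite[Theorem~4.2]{L} there is $N_1$ such that, for $n\ge N_1$, the sheaf $V_n$ --- and, using Corollary \ref{cor2.2}, every slope-$0$ subquotient of it --- is an object of the abelian category of Gieseker-semistable sheaves of Hilbert polynomial $p_{\sO_X}$, whose simple objects are the $\mu$-stable slope-$0$ sheaves of that Hilbert polynomial; replace $V$ by $V(N_1)$. The crucial point is that $F^*$ of a slope-$0$ subquotient $S$ of $V_{n+1}$ is again a slope-$0$ subquotient of $F^*V_{n+1}=V_n$, hence again an object of this category --- its instability controlled for free by that of $V_n$. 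Therefore the $F$-pullback of a Jordan--H\"older filtration of $V_{n+1}$ is a filtration of $V_n$ by Gieseker-semistable subsheaves, so the number $c_n$ of Jordan--H\"older factors of $V_n$ satisfies $c_n\ge c_{n+1}$, and $(c_n)$ is eventually constant. From that point on the $c_{n+1}$ graded pieces $F^*S$ of the pulled-back filtration of $V_n$ each have exactly one Jordan--H\"older factor, i.e.\ are $\mu$-stable; in particular $F^*$ carries the socle of $V_{n+1}$ (its largest semisimple subobject) into the socle of $V_n$, so the socle ranks are non-increasing, eventually constant, and finally $F^*(\mathrm{soc}\,V_{n+1})=\mathrm{soc}\,V_n$ for $n$ large. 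Extended backwards by $F^*$, this defines a nonzero sub-stratified bundle of $V$, so by simplicity every $V_n$ is polystable. Writing $V_n=\bigoplus_j W_{n,j}^{\oplus a_j}$ with the $W_{n,j}$ pairwise non-isomorphic, $\mu$-stable of slope $0$, the relation $F^*V_{n+1}=V_n$ and Krull--Schmidt show that for $n$ large $F^*$ matches $\{W_{n+1,j}\}_j$ bijectively with $\{W_{n,j}\}_j$, respecting the multiplicities $a_j$; then a proper nonempty subset of these isotypic components spans a proper nonzero sub-stratified bundle of $V$, as does a single $\mu$-stable summand when some $a_j\ge 2$. Simplicity of $V$ excludes both, so $V_n$ is $\mu$-stable for $n$ large; this proves the claim, hence the Proposition.

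\emph{The main obstacle.} The heart of the argument is Step 2, above all the assertion that the socle --- and then the isotypic decomposition --- of $V_n$ is eventually preserved by $F^*$. The difficulty is that $F^*$ preserves neither semistability (by \cite[Theorem~1]{Gstable}, even $F^*$ of a stable bundle need not be stable), nor polystability, nor the socle. What rescues the monotonicity of the Jordan--H\"older length is the cheap remark that $F^*$ of a slope-$0$ subquotient of $V_{n+1}$ automatically lands inside the already semistable $V_n$; once this is in hand, descending the socle, then the isotypic components, then excluding multiplicities $\ge 2$, all follow formally from simplicity. The remaining technical care concerns saturations of subsheaves and the bookkeeping with Jordan--H\"older factors, and the use of Langer's boundedness needed to guarantee --- via the numerical triviality of the Chern classes of slope-$0$ subsheaves of the fixed bundle $V_0\cong(F^n)^*V_n$ --- that the $V_n$ and their subquotients genuinely lie in this abelian category rather than merely being $\mu$-semistable.
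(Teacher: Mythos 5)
Your overall architecture mirrors the paper's: eventual $\mu$-semistability from the bound $p^n\mu(U_n)\le\mu_{{\rm max}}(E_0)$, stabilization of a socle-type subsheaf under $F^*$, decomposition into stable summands with multiplicities excluded by simplicity, and a d\'evissage (your composition series in ${\sf Strat}(X)$ versus the paper's induction on the rank of $E/U$) to handle extensions. The genuine gap lies in your repackaging of the middle step inside the abelian category of Gieseker-semistable sheaves with reduced Hilbert polynomial $p_{\sO_X}$. First, the simple objects of that category are the $\chi$-stable (Gieseker-stable) sheaves, not the $\mu$-stable ones; since $\chi$-stable does not imply $\mu$-stable, your argument, even if completed, yields only $\chi$-stability of the factors, which is strictly weaker than what the Proposition asserts and than what Remark \ref{rmk3.3} later relies on (the implication that does hold, and that the paper invokes only at the very end, is the reverse one: $\mu$-stable $\Rightarrow$ $\chi$-stable, \cite[Lemma~1.2.13]{HL}). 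Second, membership in that category is not available where you need it: Brenner--Kaid give $\mu$-semistability of $V_n$, and $\mu$-semistability does not imply $\chi$-semistability; moreover a slope-$0$ subquotient of a Gieseker-semistable sheaf with reduced Hilbert polynomial $p_{\sO_X}$ is $\mu$-semistable of slope $0$ but its reduced Hilbert polynomial can differ from $p_{\sO_X}$ in lower-order terms, so the graded pieces $F^*S$ of your pulled-back filtration need not be objects of the category and the length additivity behind $c_n\ge c_{n+1}$ is not justified. Langer's boundedness does not repair either point; it controls families, not the comparison of Hilbert polynomials of slope-$0$ subsheaves.

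The paper sidesteps both problems by never leaving the $\mu$-world: after establishing $\mu$-semistability of slope $0$ for $n\ge n_0$, it takes $U_n\subset E_n$ to be the socle (the maximal $\mu$-polystable slope-$0$ subsheaf) and obtains the stabilization not from a monotone Jordan--H\"older length but from the noetherian descending chain $\cdots\subset(F^{n+1})^*(U_{n+1})\subset(F^n)^*(U_n)\subset\cdots\subset E_0$ inside the fixed sheaf $E_0$. The resulting sub-stratified bundle is then split into stratified summands with $\mu$-stable members by a slope computation on $F^*S_{n+1}^b\cap S_n^c$ (which must be $0$ or all of $S_n^c$), and one concludes by induction on rank. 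If you insist on your Jordan--H\"older formulation you would have to work with $\mu$-Jordan--H\"older filtrations, whose graded pieces are only well defined up to modifications in codimension $\ge 2$; the bookkeeping you defer to ``saturations of subsheaves'' is exactly where the difficulty sits, and the socle avoids it entirely.
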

\begin{proof}
Since $\mu$-stability implies $\chi$-stability (\cite[Lemma~1.2.13]{HL}), it is enough to prove the  proposition with $\mu$-stability. We first show that for $n_0$ large enough, $E_n, n\ge n_0$ is $\mu$-semistable (see \cite[Lemma~2.2]{BK}). Let $U_n$ be the a nontrivial subsheaf of $E_n$. Assume $\mu(U_n)>0=\mu(E_n)$ (Corollary \ref{cor2.2}). Since $(F^n)^*(U_n)\subset E_0$,  $\mu((F^n)^*(U_n))=p^n \mu(U_n)$ is bounded by $\mu_{{\rm max}}(E_0)$, the slope of the maximal destabilizing subsheaf of $E_0$. One concludes that there is a $n_0\ge 0$ such that for all  $n\ge n_0$, one has  
$\mu_n(U_n)\le 0$. Thus $E_n$ is $\mu$-semistable for $n\ge n_0$. \\[.1cm]
To show the proposition, we may now assume that $n_0=0$, that is all $E_n$ are $\mu$-semistable of slope $0$. 
Let $U_n\subset E_n$ be the socle of $E_n$, that is the maximal nontrivial subsheaf which is $\mu$-polystable of slope $0$. Then $(F^n)^*(U_n)\subset E_0$ has still slope 0, thus has to lie in $U_0$. This yields a decreasing sequence $\ldots \subset (F^{n+1})^*(U_{n+1})\subset (F^n)^*(U_n)\subset \ldots \subset  E_0$, which has to be stationary for $n$ large. Thus there is a $n_1 \ge 0$ such that for all $n\ge n_1$, 
$F^*U_{n+1}=U_n$. So $(U_{n_1}, U_{n_1+1}, \ldots )\subset E(n_1)$ is a substratified sheaf. Thus it is a substratified bundle, as $\sO_X$-coherent $\sD_X$-coherent modules are locally free (\cite[Lemma~6]{dS}).\\[.1cm]
So we may assume $n_1=0$.   Write $U_n=\oplus_{b=1}^{a(n)} S_n^b$ where $S_n^b$ is $\mu$-stable of slope $0$. Then one has an exact sequence  $0\to F^*S_{n+1}^b\cap S_n^c \to  F^*S_{n+1}^b\oplus  S_n^c \to  F^*S_{n+1}^b + S_n^c\to 0$ where the sum on the right is taken in $U_n$.  Thus $\mu(
F^*S_{n+1}^b + S_n^c)\le 0$ and $\mu( F^*S_{n+1}^b\cap S_n^c)\le 0$, and we conclude that  $F^*S_{n+1}^b\cap S_n^c$ is either $S^c_n$ or else is equal to $0$. We conclude that $U=(U_n)_{n\in \N}\subset E=(E_n)_{n\in \N}$ is a direct sum $U=\oplus_{b=1}^aU^b$ where $U^b=(U_n^b)_{n\in \N}$ is a stratified bundle which has the property that $U_n^b$ is $\mu$-stable of slope $0$.\\[.1cm]
We now finish the proof using that the category of stratified bundles is abelian: we replace $E$ by $E/U$, which has a strictly lower rank, and we redo the argument. 
\end{proof}
\begin{prop} \label{prop2.4}
Let $X$ be a smooth projective variety defined over a perfect field $k$ of characteristic $p>0$. Let $x\to X$ be a geometric point. If  $\pi^{{\rm \et}}_1(X\otimes_k {\bar k}, x)$ has no  quotient isomorphic to $\Z/p$, 
 then stratified bundles which are successive extensions of the trivial stratified bundle by itself are trivial. 
\end{prop}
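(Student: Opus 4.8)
The plan is to translate the statement into a concrete statement about the projective system $\varprojlim_n H^1(X\otimes_k\bar k,\sO_X)$ over Frobenius, following the strategy sketched in the introduction for the rank-one case and invoking dos Santos' description of the subcategory spanned by iterated self-extensions of the unit object. Concretely, a stratified bundle $E=(E_n)$ which is a successive extension of the trivial stratified bundle $\mathbf 1=(\sO_X,\ldots)$ by itself is, by the Tannakian formalism, a representation of $\pi_1^{{\rm strat}}(X,x)$ that factors through a unipotent quotient; equivalently, it is an object of the full subcategory $\langle\mathbf 1\rangle^{\otimes}$ generated by $\mathbf 1$ under extensions. By dos Santos \cite[(9)]{dS} this subcategory is controlled by the pro-vector space $V:=\varprojlim_n H^1(X\otimes_k\bar k,\sO_X)$, where the transition maps are induced by (a twist of) the absolute Frobenius $F$ acting on $H^1$. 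So it suffices to show that this projective system is the zero pro-object, i.e.\ that for every $n$ there is an $m\ge n$ such that the iterated Frobenius map $F^{m-n,*}\colon H^1(X,\sO_X)\to H^1(X,\sO_X)$ vanishes — or more precisely that the image stabilizes to $0$.

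The key input is the hypothesis on $\pi_1^{\et}$: the absence of a $\Z/p$ quotient means there are no nontrivial $\Z/p$-torsors on $X\otimes_k\bar k$, hence (Artin–Schreier theory) the map $F-1\colon H^1(X,\sO_X)\to H^1(X,\sO_X)$ is injective on the $\bar k$-points, i.e.\ $H^1(X,\sO_X)$ has no nonzero element fixed by the $p$-linear Frobenius operator $F$. I would first reduce to $k=\bar k$ (base change is harmless here, as $H^1$ and $F$ are compatible with extension of the perfect base field, and triviality can be checked after such a base change). Now $H^1(X\otimes_k\bar k,\sO_X)$ is a finite-dimensional $\bar k$-vector space equipped with a $p^{-1}$-linear-or-$p$-linear Frobenius $\varphi$; the Artin–Schreier/Serre structure theory of such a ``Frobenius module'' over a perfect field decomposes it as $V_{ss}\oplus V_{nil}$, where $\varphi$ is bijective on the semisimple part $V_{ss}$ and nilpotent on $V_{nil}$. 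The no-$\Z/p$-quotient hypothesis forces $V_{ss}=0$ (a nonzero bijective part would produce, after passing to $\bar k$, a fixed vector, hence an Artin–Schreier cover). Therefore $\varphi$ is nilpotent on all of $H^1(X,\sO_X)$, so $\varphi^m=0$ for $m=\dim_{\bar k}H^1$; this is exactly the statement that the projective system $V=\varprojlim_n H^1(X,\sO_X)$ is zero.

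Putting it together: since the pro-object governing $\langle\mathbf 1\rangle^{\otimes}$ vanishes, $\langle\mathbf 1\rangle^{\otimes}$ contains no nontrivial extensions of $\mathbf 1$ by $\mathbf 1$, and hence every iterated self-extension of $\mathbf 1$ splits, i.e.\ is trivial as a stratified bundle. I expect the main obstacle to be the precise bookkeeping in two places: (i) correctly identifying the transition maps in dos Santos' projective system (whether one works with $F^*$ on $H^1(X,\sO_X)$ or its dual, and the role of the polarization/twist), so that ``the pro-object is zero'' genuinely corresponds to ``$\Ext^1_{{\sf Strat}}(\mathbf 1,\mathbf 1)=0$ after passing far enough down the tower''; and (ii) the descent step from the class of the extension living over $X\otimes_k\bar k$ back to a genuine $\Z/p$-torsor, which uses that $X$ is proper (so $H^0(X,\sO_X)=\bar k$, making Artin–Schreier cohomology of $X$ behave as expected) and that a Frobenius-fixed class in $H^1(X,\sO_X)$ yields an étale $\Z/p$-cover, contradicting the hypothesis. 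Both points are standard but must be stated carefully; once they are in place the argument is the verbatim higher-"unipotent-length" analogue of the rank-one sketch in the introduction.
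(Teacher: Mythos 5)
Your proposal is correct and follows essentially the same route as the paper: reduce to $k=\bar k$, invoke dos Santos' description of iterated self-extensions of the unit object via $\varprojlim_n H^1(X,\sO_X)$ under $F^*$, split $H^1(X,\sO_X)$ into its semisimple and nilpotent parts over the perfect field, kill the semisimple part by Artin--Schreier theory using the no-$\Z/p$-quotient hypothesis, and conclude from nilpotence of $F^*$ that the projective system vanishes. The two bookkeeping points you flag are exactly the ones the paper handles (it identifies $H^1(X,\sO_X)_{\rm ss}$ with $H^1(X,\Z/p\Z)\otimes_{\F_p}k$ and descends triviality from $\bar k$ to $k$ via $H^0(X,E_n)\otimes_k\sO_X\to E_n$), so no gap remains.
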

\begin{proof} We assume $k=\bar k$. 
 We  have to show that a stratified bundle $E=(E_n)_{n\in \N}$ which is a successive extension 
of ${\mathbb I}:=(\sO_X, \sO_X,\ldots)$ by itself is trivial. 
 By dos Santos' theorem  \cite[(9)]{dS}, the isomorphism class of $E$ lies in the projective system 
$\varprojlim_n H^1(X, \sO_X)$ where the transition maps are the pullback maps $F^*: H^1(X, \sO_X)\to H^1(X, \sO_X)$ via the absolute Frobenius. On the other hand, since $k$ is perfect, one has the decomposition  $H^1(X, \sO_X)=
H^1(X, \sO_X)_{{\rm ss}}\oplus H^1(X, \sO_X)_{{\rm nilp}}$ where $H^1(X, \sO_X)_{{\rm ss}}=H^1(X, \Z/p\Z)\otimes_{\F_p} k$ and $H^1(X, \sO_X)_{{\rm nilp}} \subset H^1(X, \sO_X)$ is defined as the $k$-subvectorspace of classes on which $F^*$ is nilpotent. Since $H^1(X, \Z/p\Z)={\rm Hom}(\pi_1(X), \Z/p\Z)$, this group is 0 by the assumption.
 Since $H^1(X, \sO_X)$ is a finite dimensional vectorspace, and $F$ is semilinear, there is a $N\in \N\setminus \{0\}$ such that $F^{N*}$ annihilates $H^1(X, \sO_X)_{{\rm nilp}}$. Thus we conclude $\varprojlim_n H^1(X, \sO_X)=\varprojlim_n H^1(X, \sO_X)_{{\rm nilp}}=0$. Thus the stratified bundle $E$ is trivial. This finishes the proof over $k=\bar k$. In general, this shows that if $E$ is a stratified bundle on $X$ which is a successive extension of the trivial stratified bundle by itself, then $E\otimes_k \bar k$ is a trivial stratified bundle on $X\otimes_k \bar k$. Thus for any $n\ge 0$, $H^0(X, E_n)\otimes_k \sO_X \to E_n$ is an isomorphism after tensoring with $\bar k$ over $k$, thus it is an isomorphism. Thus $E_n$ is trivial for all $n\ge 0$, so $E$ is trivial. This finishes the proof in general.  
\end{proof}

\section{The proof of the main theorem and its corollary}
Let $X$ be a smooth projective connected variety defined over a perfect  field $k$ of characteristic $p>0$. We fix an ample line bundle $\sO_X(1)$, a rank $r>1$ and consider the quasi-projective moduli scheme $M$ of $\chi$-{\it stable} torsionfree sheaves with Hilbert polynomial $p_{\sO_X}$ and rank $r$, as defined by Gieseker \cite[Theorem~0.2]{G1} in dimension 2 and 
and Langer \cite[Theorem~4.1]{L2} in any dimension.\\[.1cm] Even if one can reduce the statement of Theorem \ref{thm1.1} to the dimension 2 case by a Lefschetz type argument on stratified bundles, we will need the strength of Langer's theorem.
Let us recall from {\it loc. cit.} that if $S$ is a smooth absolutely connected affine variety over a finite field $\F_q$ such that $\F_q(S)\subset k$, and if $X_S\to S$ is a smooth projective model of $X$, then there is a quasiprojective scheme $M_S\to S$ which universally corepresents the functor of families of $\chi$-stable torsionfree sheaves 
  of rank $r$ and Hilbert polynomial $p_{\sO_X}$ on geometric fibers.
This concept  is due to Simpson \cite[p.~60]{Simp}. The scheme $M_S$ has several properties. It represents the \'etale sheaf  associated to the moduli functor. If $s\in S$ is any closed point, then $M_S\times_S s$ is $M_s$. If $u\in M_S$ is any closed point above the closed point $s\in S$, there is a $\chi$-stable torsionfree sheaf  $E$ on $X_S\times_S s$ with moduli point $u$.  
  \\[.1cm]
We will use the notation $[E]\in M$ to indicate the closed point in $M$ which represents the stable bundle $E$. As is well known \cite[Theorem~1]{Gstable}, even if $E$ is stable, the bundle $F^*(E)$ need not be stable. On the other hand, stability is an open condition. Thus there is an open subscheme $M^0\subset M$  such that $F^*(E)$ is stable for all $[E]\in M^0$.
\begin{defn} \label{defn3.1}
 The morphism   $\xymatrix{ V: M^0\to M}$ is called the {\it Verschiebung}.
\end{defn}

\begin{defn} \label{defn3.2} 

 We define the locus $M^s\subset M$ consisting of points $[E]\in M$ which are $\mu$-stable of slope $0$ such  that there is a statified bundle $(E_n)_{n\in \N}$ with $E=E_0$. (The upper script $s$ here stands for stratified.)

\end{defn}
\noindent This is the key definition of the article due to the following remark. 

\begin{rmk} \label{rmk3.3}
All the bundles $E_n, n\ge 0$ in Definition \ref{defn3.2} have the property that they are $\mu$-stable of slope $0$ by the computation of Proposition \ref{prop2.3}: $\mu((F^n)^*U_n)=p^n\mu(U_n)$, hence $\mu(U_n)<0$. Thus a point $[E] \in M^s$ defines a sequence of points $[E_n]_{n\in \N, n\ge 0}\in M^s$ such that $(E_n)_{n\in \N}$ is a stratified bundle.
\end{rmk}
\noindent We now define various closed subschemes of $M$ using $M^s$. 
\begin{defn} \label{defn3.4} Let $E=(E_n)_{n\in \N}$ be a stratified bundle with $E_0\in M^s$ (thus $E_n\in M^s$ for all $n\ge 0$).
\begin{itemize}
\item[1)]  We define $A(E)$ to be the Zariski closure of the locus $\{E_n, n\ge 0\}$ in $M$. 
\item[2)] We define $N(E)=\cap_{n\ge 0} A(E(n))$ where $E(n)=(E_{n+m, m\ge 0})$.

\end{itemize}  \noindent

Closed subschemes of $M$ of type $N(E)$ as in 2)  are called {\it subschemes of} $M$ {\it spanned by stratifications}. 
\end{defn}
\begin{rmks} \label{rmk3.5} 
\begin{itemize}
\item[1)] As $M^s$ is defined by its $k$-points, any   subscheme of $M$ spanned by stratifications lies in $M_{{\rm red}}$. 
\item[2)] Let $E=(E_n)_{n\in \N}$ be a stratified bundle with $E_n\in M^s$ for all $n\ge 0$. As $A(E(n+1))\subset A(E(n))$ are closed subschemes of $M_{{\rm red}}$, by the noetherian property there is a $n_0\ge 0$ such that $A(E(n))=A(E(n_0))=N(E)$ for all $n\ge n_0$.  Thus  there is a hierarchy in the definition:
subschemes of  type $N(E)$ are  of type $A(E)$, and 
to show that $M^s$ is empty is equivalent to showing that subschemes of $M$ spanned by stratifications are empty.

\end{itemize}
\end{rmks}
\begin{defn}  \label{defn3.6}
 A closed subscheme  $N\subset M_{{\rm red}}$  is called {\it Verschiebung divisible} if $
\xymatrix{ V|_{N}: N\ar@{.>}[r] & N}$ is a rational  map which is dominant on all the components of $N$. We denote by $N^1\subset N$ the dense locus on which $V|_N$ is defined. 
 
\end{defn}

\begin{lem} \label{lem3.7} Let $N$ be a Verschiebung divisible subscheme of $M$. 
For all natural numbers $a\in \N\setminus \{0\}$, the composite $V|_{N}^a:=V|_{N} \circ \ldots \circ V|_{N}$ ($a$-times) is a dominant rational map $\xymatrix{ V|_{N}^a: N\ar@{.>}[r] & N}$ and there is a natural number $a \in \N\setminus \{0\}$ such that $V|_{N}^a$ stabilizes all the irreducible components of $N$. 
\end{lem}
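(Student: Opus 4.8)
The plan is to turn the statement into a pigeonhole argument on the finite set of irreducible components of $N$, the only genuine subtlety being that composing a rational self-map with itself requires dominance in order to be well defined.

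First I would set up notation. Since $M$ is noetherian and $N\subseteq M_{\rm red}$ is closed, $N$ is a reduced $k$-scheme of finite type with finitely many irreducible components $N_1,\dots,N_k$, each integral; and $V|_N$ is given by an honest morphism on the dense open $N^1\subseteq N$ of Definition~\ref{defn3.6}, whose trace $N_i^1:=N_i\cap N^1$ on each component is a non-empty (hence dense, hence integral) open. The morphism $V|_N\colon N_i^1\to N$ has irreducible image, so $\overline{V|_N(N_i^1)}$ is contained in a single component $N_{\pi(i)}$. Using that $V|_N$ is dominant on all the components of $N$ (equivalently, $\bigcup_i\overline{V|_N(N_i^1)}$ is dense in $N$), together with the elementary fact that an irreducible closed set lying in a finite union of closed sets lies in one of them, one checks that the assignment $i\mapsto\pi(i)$ is a bijection of $\{1,\dots,k\}$ and that in fact $\overline{V|_N(N_i^1)}=N_{\pi(i)}$ for every $i$. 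Thus, restricting to components, $V|_N$ induces for each $i$ a \emph{dominant} rational map $N_i\dashrightarrow N_{\pi(i)}$ of integral varieties.

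Next I would invoke the standard fact that dominant rational maps between integral varieties compose to dominant rational maps: if $f\colon A\dashrightarrow B$ and $g\colon B\dashrightarrow C$ are dominant and defined on dense opens $U$ and $W$, then $U\cap f^{-1}(W)$ is a non-empty (hence dense) open of $A$ on which $g\circ f$ is defined, and if its image were not dense in $C$ then a dense subset of $W$ would map into the proper closed set $\overline{g\circ f(U\cap f^{-1}(W))}$, forcing $g(W)$ not to be dense, a contradiction. Chaining this along $N_i\dashrightarrow N_{\pi(i)}\dashrightarrow N_{\pi^2(i)}\dashrightarrow\cdots$ shows that for every $a\in\N\setminus\{0\}$ the composite $V|_N^a:=V|_N\circ\cdots\circ V|_N$ ($a$ times) is a well-defined rational map $N\dashrightarrow N$, which on each $N_i$ is dominant onto $N_{\pi^a(i)}$; in particular $N$ is again Verschiebung divisible for $V|_N^a$, with associated permutation $\pi^a$.

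Finally, $\pi$ is a permutation of the $k$-element set $\{1,\dots,k\}$, so $\pi^a=\id$ for some $a\in\N\setminus\{0\}$ (for instance the order of $\pi$, or $a=k!$); for such an $a$ the rational map $V|_N^a$ carries each component $N_i$ dominantly onto $N_{\pi^a(i)}=N_i$, i.e. it stabilizes all the irreducible components of $N$, which is the assertion. The step I expect to need the most care is the middle one: on a reducible scheme the composite of two rational maps need not even be defined, and it is precisely the hypothesis ``dominant on all the components'' in Definition~\ref{defn3.6} that allows one to pass to the integral components $N_i$ and there compose honestly dominant rational maps; the rest is bookkeeping with a finite permutation.
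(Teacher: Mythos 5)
Your proof is correct and follows essentially the same route as the paper's: the image of each irreducible component under the rational map lies in a single component, dominance plus a counting (pigeonhole) argument shows the induced map on components is a permutation $\pi$, and some power of $\pi$ is the identity. Your write-up merely makes explicit two points the paper leaves implicit, namely that composing dominant rational maps of integral varieties is well defined and dominant, and that this is what guarantees the iterates $V|_N^a$ exist.
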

\begin{proof}
 Since $V|_N$ is a rational dominant self-map, the iterate maps $V^a|_N$ are rational dominant self-maps as well. On the other hand, the image of an irreducible component by a rational map lies in an irreducible component. So by a counting argument, a rational dominant self-map permutes the irreducible components. Thus an iterate of the map stabilizes the components.  
\end{proof}

\begin{lem} \label{lem3.8}
 Subschemes of $M$ which are spanned by stratifications are Verschiebung divisible. 
\end{lem}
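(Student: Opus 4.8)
The plan is to show that a subscheme $N=N(E)$ spanned by a stratification $E=(E_n)_{n\in\N}$ (with $E_0\in M^s$, hence $E_n\in M^s$ for all $n$ by Remark~\ref{rmk3.3}) is Verschiebung divisible, and the whole argument is soft once Remark~\ref{rmk3.3} is in hand. First I would normalize $E$: by Remark~\ref{rmk3.5}(2) the descending chain $A(E(n))$ stabilizes to $N$ for $n\ge n_0$, and since $N(E(n_0))=N(E)$ I may replace $E$ by $E(n_0)$ and thereby assume $A(E(n))=N$ for every $n\ge 0$. This step is genuinely needed: for an arbitrary $N(E)$ the points $[E_n]$ need not be dense in $N$. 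After the reduction, both sets $S=\{[E_n]:n\ge 0\}$ and $S'=\{[E_n]:n\ge 1\}$ are Zariski dense in $N$, being dense in $A(E)=N$ and in $A(E(1))=N$ respectively.

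Next I would extract the self-dynamics from the stratification itself. For $n\ge 1$, the isomorphism $\sigma_{n-1}$ gives $F^*E_n\cong E_{n-1}$, and by Remark~\ref{rmk3.3} each $E_m$ is $\mu$-stable of slope $0$, hence $\chi$-stable; therefore $F^*E_n$ is $\chi$-stable, so $[E_n]\in M^0$ and $V([E_n])=[F^*E_n]=[E_{n-1}]$. Thus $S'\subseteq N\cap M^0$, and since $S'$ is dense in $N$ the open subscheme $N\cap M^0$ is dense in $N$; in particular $V$ is defined on a dense open of $N$. To check that $V$ carries this open set back into $N$ — the one point that needs care, since $V$ is a priori only a morphism on the open set $M^0\subset M$ — I would use continuity: $V$ maps the closure of $S'$ taken inside $M^0$, which is exactly $N\cap M^0$, into the closure of $V(S')=S$ taken in $M$, which is $A(E)=N$. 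Hence $V(N\cap M^0)\subseteq N$, so $V$ restricts to a rational map $V|_N\colon N\dashrightarrow N$ with $N^1\supseteq N\cap M^0$. Dominance onto $N$ is then immediate, as the image of $V|_N$ already contains the dense set $S=V(S')$.

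Finally, to upgrade "dominant" to "dominant on every irreducible component" I would repeat the bookkeeping of Lemma~\ref{lem3.7}. Let $N_1,\dots,N_r$ be the irreducible components of $N$. Each $N_i\cap N^1$ is a nonempty (because $N\cap M^0$ is dense in $N$) open subset of the irreducible $N_i$, so $\overline{V(N_i\cap N^1)}$ is irreducible and lies in some $N_{\sigma(i)}$. From
$N=\overline{V(N^1)}=\bigcup_i\overline{V(N_i\cap N^1)}\subseteq\bigcup_i N_{\sigma(i)}$
and the irredundancy of the irreducible decomposition, $\sigma$ is surjective, hence a bijection of $\{1,\dots,r\}$, and $\overline{V(N_i\cap N^1)}=N_{\sigma(i)}$ for each $i$. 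Therefore $N$ is Verschiebung divisible. I do not anticipate a real obstacle here; everything is topological/continuity bookkeeping, and the essential input — that $V$ sends a dense open of $N$ into $N$ — comes precisely from the stratification structure ($\{[E_n]\}$ is $V$-invariant up to an index shift) combined with the stabilization $A(E(n))=N$.
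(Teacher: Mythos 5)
Your proof is correct and follows essentially the same route as the paper's: the points $[E_n]$, $n\ge 1$, lie in the locus where $V$ is defined, $V$ shifts the index down by one, and since both the shifted and unshifted sets of points are dense in $N$ (after the stabilization of the $A(E(n))$), continuity gives $V(N\cap M^0)\subseteq N$ and dominance. Your extra care with the continuity step and with dominance on each irreducible component merely makes explicit what the paper leaves implicit or defers to the counting argument of Lemma~\ref{lem3.7}.
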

\begin{proof} If $N$ is empty, there is nothing to prove. Else $N$ is constructed as the Zariski closure of family of points $S=\{e_n, n\ge n_0\}$ in $M$ which have the property that for all $n\ge n_0$, there is a point $e_{n+1}\in N$ with $V(e_{n+1})=e_n$, and which in addition have the property that $ N$ is also the Zariski closure of the family of points $S[m]=\{e_n, n\ge m+ n_0\}$ for any $m\ge 0$.
 Taking $m=1$, one has $V(S)=S[1]$.
Let $N^1\subset N$ be the locus on which $V$ is defined and $V(N^1)\subset N$.  Then $N^1$ contains $S[1]$. Thus 
$V|_{N}: N^1\to N$ is dominant on $N$ and is dense in $N$. This finishes the proof.

\end{proof}
Recall that if $Z$ is any scheme of finite type defined over $k$, and  $S$  is a smooth affine variety, which is defined over a finite field $\F_{q}$, is  geometrically irreducible over $\F_{q}$, then a {\it model} $Z_S/S$ is a $S$-flat scheme such that $Z_S\otimes_S k=Z$. \\[.1cm]
Let
 $S_0$  be a smooth affine variety, which is defined over a finite field $\F_{q_0}$, is  geometrically irreducible over $\F_{q_0}$, such that 
\begin{itemize}
 \item[i)] $X/k$ has a smooth projective model $X_{S_0}\to S_0$,
\item[ii)]  the absolute Frobenius $F=F_{X}: X\to X$ over the Frobenius $F_k: \Spec k\to \Spec k$ has a model $F_{X_{S_0}}: X_{S_0} \to X_{S_0}$ over the absolute Frobenius $F_{S_0}: S_0\to S_0$.
\end{itemize}
 By Langer's theorem \cite[Theorem~4.1]{L}, there is then a quasiprojective model $M_{S_0}\to S_0$ of $M/k$ which universally corepresents the functor of $\chi$-stable torsionfree sheaves of rank $r$ and Hilbert polynomial $p_{\sO_X}$. 
\begin{defn} \label{defn3.9} Let
$S\to S_0$ by a morphism, such that $S$ is a smooth affine variety, geometrically irreducible over a finite extension $\F_q$ of $\F_{q_0}$, with $\F_{q_0}(S_0)\subset \F_q(S)\subset k$. Let  $X_S, M_S$ the base changed varieties $X_{S_0}, M_{S_0}$. 
Let $N$ be a Verschiebung divisible subscheme  of $M_{{\rm red}}$.  Then a model $N_S$ of $N$  is called {\it a good model} if 
\begin{itemize}
\item[a)] all irreducible components $N_i\subset  N\subset M_{{\rm red}}$ of $N$, for  $ i=1,\ldots, \rho$,  have a model $N_{iS}\subset M_S$ over $S$,
\item[b)] $V|_N: N^1\to N$ has a model 
$V_S: N^1_S\to  N_S$, where $N_S=\cup_{i=1}^\rho N_{iS}\subset M_S$
and $N^1_S\subset N_S$ is dense.
\end{itemize}
\end{defn}
\noindent 
Note, for some closed point $s\to S$, $N_{iS}\times_S s$ might be reducible.\\[.1cm]
For a closed point $s\to S_0$, we denote by $M_{S_0}\times_{S_0} s, \ X_{S_0}\times_{S_0} s=X_s, \  F_{S_0}\times_{S_0} s$ the reductions to $s$. By the universal corepresentability, one has $M_{S_0}\times_{S_0} s= M_s$, Langer's  moduli scheme of $\chi$-stable torsionfree sheaves of rank $r$ with Hilbert polynomial $p_{\sO_X}= p_{\sO_{X_s}}$. By definition,  
$F_{S_0}\times_{S_0} s$  is the absolute Frobenius $F_s$.  We denote by $V_s$ the Verschiebung on $M_s$ and by $V_{S_0}\times_S s$ the reduction of $V_S$ to $s$ on the locus where it is defined. Let $M^0_{S}\subset M_{S}$ be a model of $M^0$, with $S\to S_0$, with $S$ smooth affine over a finite extension $\F_q$ of $\F_{q_0}$. Then by definition, 
$V_{S}\times_{S}s$ is defined on $M^0_{S}\times_{S}s$. 
\begin{lem} \label{lem3.10} 
One has $V_s= V_{S}\times_S s$ on  $M^0_{S}\times_{S}s$.
\end{lem}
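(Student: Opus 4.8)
\noindent\emph{Proof strategy (proposal).} The plan is to exhibit $V_s$ and the reduction $V_S\times_S s$ as two values of a single construction attached to the moduli problem --- pull a family of $\chi$-stable torsionfree sheaves back along the absolute Frobenius and read off the resulting morphism of moduli --- and then to use that this construction commutes with the base change $s\to S$, the crucial input being that, by Langer's theorem \cite[Theorem~4.1]{L2}, $M_S$ \emph{universally} corepresents the moduli functor, equivalently represents its \'etale sheafification.

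In detail, for every $T\to S_0$ over which $X$ has a smooth projective model $X_T\to T$, let $\underline M_T$ be the functor sending a $T$-scheme $T'$ to the set of isomorphism classes of $T'$-flat families on $X_{T'}=X_T\times_T T'$ of $\chi$-stable torsionfree sheaves of rank $r$ with Hilbert polynomial $p_{\sO_X}$, and let $M_T\to T$ be the scheme representing its \'etale sheafification. For $\mathcal F\in\underline M_T(T')$, the pullback $F_{X_{T'}}^*\mathcal F$ along the absolute Frobenius $F_{X_{T'}}:X_{T'}\to X_{T'}$ is again a $T'$-flat family --- here one uses that the relative Frobenius $X_{T'}\to X_{T'}^{(p)}$ is finite flat --- with fibre over $t'$ equal to $F_{X_{t'}}^*(\mathcal F|_{X_{t'}})$; over the open subscheme $M^0_T\subset M_T$ on which this operation keeps the sheaf $\chi$-stable and inside $M_T$, it induces a morphism of the corresponding \'etale sheaves, hence a morphism of schemes $V_T:M^0_T\to M_T$. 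Taking $T=\Spec k$ gives the Verschiebung of Definition \ref{defn3.1}, and taking $T=s$ gives $V_s$. Since the formations of $\underline M_T$, of the open condition cutting out $M^0_T$, of the absolute Frobenius, and of the universally corepresenting scheme all commute with base change $T''\to T$, so does that of $V_T$.

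It then suffices to observe that the model $V_S$ of the statement \emph{is} this $V_T$ for $T=S$. Indeed $V_S$ is built from the model $F_{X_{S_0}}$ of the absolute Frobenius provided by hypothesis ii), and the reduction of $F_{X_{S_0}}$ along $S\to S_0$ is the absolute Frobenius $F_{X_S}$ of $X_S$, because the absolute Frobenius of an $\F_p$-scheme is intrinsic --- the identity on the underlying space and the $p$-th power on the structure sheaf --- so that every model of $F_X$ over $F_{S_0}$ equals $F_{X_S}$. Hence $V_S\times_S s$ is the base change to $s$ of the construction above; since $F_{X_S}$ restricts on the fibre over $s$ to $F_{X_s}=F_s$ (intrinsicness once more), that base change is precisely $V_s$, which is the asserted equality on $M^0_S\times_S s$.

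The step that needs care --- and hence the main obstacle --- is the base-change compatibility of $T\mapsto V_T$ in the regime where $M_T$ is only a coarse moduli scheme. Concretely one descends to an atlas: a Quot scheme $R_T$ carrying a tautological family, equipped with a $\mathrm{PGL}$-action whose universal good quotient is $M_T$; one pulls the tautological family back along the Frobenius, twists by $\sO_X(1)^{\otimes N}$ for $N\gg 0$, pushes forward to re-present it as a quotient of a trivial bundle, and passes to the $\mathrm{PGL}$-quotient, checking that the resulting morphism to $M_T$ depends neither on $N$ nor on the \'etale localization used, and that each step survives base change. Equivalently, and more cheaply, a morphism to $M_T$ is a section of the \'etale sheaf it represents and \'etale sheafification commutes with the base change $s\to S$, so one just restricts the section defining $V_S$ --- the family obtained from the (\'etale-local) tautological family on $M^0_S$ by pulling it back along $F_{X_S}$ --- along $s\to S$, obtaining the analogous family for $F_{X_s}=F_s$ on $X_s$, that is $V_s$. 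All of this is routine once Langer's universal corepresentability is in hand; no idea beyond the bookkeeping (keeping the absolute and relative Frobenius and the twists straight) is needed.
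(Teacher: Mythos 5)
Your argument is correct, but it reaches the conclusion by a genuinely different route from the paper's. You construct the Verschiebung uniformly over every base $T$ as the morphism induced on the (\'etale sheafifications of the) moduli functors by the natural transformation $\mathcal F\mapsto F_{X_{T'}}^*\mathcal F$, and then the equality $V_S\times_S s=V_s$ becomes an instance of the fact that this construction commutes with arbitrary base change, the inputs being the functoriality of the absolute Frobenius (which gives the fibrewise formula $(F_{X_{T'}}^*\mathcal F)|_{X_{t'}}=F_{X_{t'}}^*(\mathcal F|_{X_{t'}})$ for any $t'\to T'$) together with Langer's universal corepresentability. The paper instead checks the equality pointwise on closed points of the special fibre: a closed point $t$ of $M^0_S\times_S s$ is lifted to a finite \'etale multisection $T\to M^{0}_S\to S$ through $s$, corepresentability yields an actual sheaf $E_T$ on $X_T$, and the key observation is that the pullback of the absolute Frobenius $F_{X_S}$ along $T\to S$ is $F_{X_T}$ precisely \emph{because} $T\to S$ is \'etale; one then specializes $F_{X_T}^*E_T$ at $t_0$. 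Your naturality argument replaces the \'etale-multisection trick and is cleaner and more uniform (it uses neither the perfectness of the residue field at that step nor a reduction to closed points). Two points you should make explicit to close the argument fully: first, passing from a natural transformation of moduli functors to an honest morphism $M^0_T\to M_T$ of coarse moduli schemes uses that $M_T$ represents the \'etale sheafification of the functor --- universal corepresentability alone would not suffice; the paper records this extra property of Langer's construction and you do invoke it, so this is fine, but it is the load-bearing step. Second, the lemma concerns \emph{a} model $V_S$ of $V$, so one must identify it with your canonical $V_T|_{T=S}$; this holds because both are defined on all of $M^0_S$ and agree with $V$ on the schematically dense generic fibre, which is essentially the ``intrinsicness of the absolute Frobenius'' point your last paragraph makes, and is worth stating as such rather than as a base-change assertion (the literal fibre product of $F_{X_{S_0}}$ over $F_{S_0}$ along a non-\'etale $S\to S_0$ is not $F_{X_S}$; what one really uses is the commuting, not Cartesian, square of Frobenii).
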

\begin{proof}
Let $t\to M^{0}_{S}\times_S s$ be a closed point. Since $\F_q$ is perfect, $t\to s$ is \'etale. There is a morphism of schemes $T\to M^{0}_{S}$, 
such that $T$ is irreducible over $\F_q$, the composite $T\to M^{0}_S \to S$ is finite and \'etale onto its image which is a neighbourhood containing $s$,  $t\in M^0_S$ is the image of $t_0\in T$,  $T\to M^0_{S}$ is a closed embedding in a Zariski open containing $t_0$. In particular, one has a factorization $\F_q(S)\subset \F_q(T)\subset k$. 
By the universal corepresentability, there is a torsionfree sheaf $E_T$ over $X_T$, 
such that $E_T\times_T k$  is $\chi$-stable of rank $r$ and with Hilbert polynomial $p_{\sO_X}$. By definition of $M^{0}_{S}$, $[E_T\times_T k]
\in M^{0}$, thus $F^*(E_T\times_T k)$ is $\chi$-stable of Hilbert polynomial $p_{\sO_X}$ and rank $r$. The sheaf
$F^*(E_T\times_T k)$ has a model which we explain now. The pullback of the absolute Frobenius $F_{S}: S\to S$ by $T\to S$ is $F_T: T\to T$, as 
$T\to S$ is \'etale. Thus the pullback of the absolute Frobenius $F_{X_{S}}: X_{S}\to X_{S}$ over $F_{S}$ by $T\to S$ is the absolute Frobenius $F_{X_{T}}: X_{T} \to X_{T}$. This defines the model $F_{X_{T}}^*E_{T}$ of $F^*(E_T\times_T k)$.
We conclude that  $F_{X_{T}}^*(E_{T})\times_T t_0= F_{X_{t_0}}^*E_{t_0}$. By definition of $t_0$, $F_{X_{t_0}}^*E_{t_0}=F_{X_t}^*E_t$. By definition, $[F_{X_t}^*E_t] \in M_t$. We conclude
 $[F_{X_t}^*E_t]=V_t([E_t])$. 
\end{proof}
\begin{cor} \label{cor3.11}
Let $N$ be a Verschiebung divisible subscheme of $M_{{\rm red}}$, and let
$N_S$ be a good model.  
Let $a\in \N\setminus\{0\}$ such that $V|_{N}^a$ stabilizes the irreducible components of $N$ as in Lemma
\ref{lem3.7}. Then 
\begin{itemize}
 \item[i)] 
there is a nontrivial open subscheme $T\subset S$ such that for all closed points $s\in T$, and for all $i\in \{1,\ldots, \rho\}$, $V_T|_{N_{iT}}^a \times_T s$ is a dominant rational self map of $N_{iT}\times_T s$;
\item[ii)]  for any closed point $s\in T$, there is a dense open subscheme of 
$N_{iT}\times_T s$ 
on which    $V_T|_{N_{iT}}^a \times_T s$  is well defined and is equal to   $V_s^a=V_s\circ \ldots \circ V_s$ ($a$-times). \end{itemize}
\end{cor}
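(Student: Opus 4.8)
The plan is to prove this by \emph{spreading out} from the generic point of $S$. Both assertions concern finitely many closed points of $S$ in a constructible way, so it suffices to establish the corresponding assertions over the generic point $\eta$ of $S$ — where, after the further base change to $k$, they reduce to Lemma~\ref{lem3.7} and to the definition of a Verschiebung divisible subscheme — and then to use generic flatness together with Chevalley's constructibility theorem to propagate them to a dense open $T\subseteq S$ and specialize to its closed points. Lemma~\ref{lem3.10} is what guarantees that, after this specialization, the reduction of the model $V_S$ is the genuine Verschiebung $V_s$.

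In detail, I would first fix an index $i\in\{1,\dots,\rho\}$. By Lemma~\ref{lem3.7}, $V|_N^a$ stabilizes $N_i$ and restricts there to a dominant rational self-map $V|_{N_i}^a\colon N_i\dashrightarrow N_i$; since $V|_{N_i}$ is a dominant rational map, the locus $U_i\subseteq N_i$ on which $V|_{N_i}^a$ is defined as a morphism is a dense open. Recalling from the proof of Lemma~\ref{lem3.8} that $V|_N$ is the restriction of the global Verschiebung $V\colon M^0\to M$ to $N^1=N\cap M^0$, I would take the model $V_S$ of $V|_N$ from the good-model hypothesis to be the restriction of a model $M^0_S\to M_S$ of $V$ (two $S$-morphisms into the separated $S$-scheme $M_S$ that agree after $\otimes_S k$ agree over the generic point $\eta$ by faithfully flat descent, hence over a dense open of $S$, so this only shrinks $S$; in particular $N_S^1\subseteq M^0_S$). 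Then $V_S^a|_{N_{iS}}\colon N_{iS}\dashrightarrow N_{iS}$ is a model of $V|_{N_i}^a$, with dense open domain $U_{iS}$ a model of $U_i$ satisfying $V_S^{j}(U_{iS})\subseteq M^0_S$ for $0\le j\le a-1$. Note that $N_{iS}$ and its generic fibre $(N_{iS})_\eta$ are irreducible, their base changes to $k$ being $N_i$; and since $(V_S^a|_{N_{iS}})_\eta$ becomes $V|_{N_i}^a$ after $\otimes_{\F_q(S)}k$, it is a dominant rational self-map of $(N_{iS})_\eta$, so $V_S^a|_{N_{iS}}$ is dominant over $S$.

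Next I would apply generic flatness and Chevalley's theorem to $N_{iS}\to S$, to the closed subscheme $N_{iS}\setminus U_{iS}$, and to the constructible image $(V_S^a|_{N_{iS}})(U_{iS})$, and intersect the finitely many resulting dense opens (one per $i$), obtaining a dense open $T\subseteq S$ such that for every closed point $s\in T$ and every $i$: the fibre $N_{iT}\times_T s$ is equidimensional of dimension $\dim N_{iS}-\dim S$; $U_{iS}\times_S s$ is a dense open of it; and $(V_S^a|_{N_{iS}})(U_{iS})\times_S s$ is dense in it. These three facts say exactly that $V_T^a|_{N_{iT}}\times_T s = (V_S^a|_{N_{iS}})\times_S s$ is a dominant rational self-map of $N_{iT}\times_T s$, which is i). For ii), I would work on the dense open $U_{iS}\times_S s$ of $N_{iT}\times_T s$: base-changing $V_S^{j}(U_{iS})\subseteq M^0_S$ to $s$ gives $V_S^{j}(U_{iS})\times_S s\subseteq M^0_S\times_S s$ for $0\le j\le a-1$, so Lemma~\ref{lem3.10} applies at each of the $a$ successive stages of $V_S^a|_{N_{iS}}=V_S\circ\cdots\circ V_S$ after reduction modulo $s$, giving $V_T^a|_{N_{iT}}\times_T s = V_s\circ\cdots\circ V_s = V_s^a$ on $U_{iS}\times_S s$; as that locus is dense in $N_{iT}\times_T s$, ii) follows.

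The main obstacle is the bookkeeping in the spreading-out step: arranging that over a \emph{single} dense open $T$ one controls simultaneously the domains of definition and the images of all the iterates of $V_S|_{N_{iS}}$, the equidimensionality of the (possibly reducible) fibres $N_{iT}\times_T s$, and the inclusion $M^0_S\times_S s\subseteq M^0_s$ needed so that Lemma~\ref{lem3.10} may legitimately be iterated; and, relatedly, pinning down the precise compatibility between the map $V_S$ provided by the good model and the genuine Verschiebung.
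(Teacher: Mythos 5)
Your proposal is correct and follows essentially the same route as the paper: spread out the well-definedness and dominance of the iterate $V|_N^a$ over a dense open $T\subset S$, then invoke Lemma~\ref{lem3.10} to identify the reduction of the model with $V_s^a$. The paper's own proof is terser (it simply asserts the existence of $S_a$ and $T$ and calls ii) a direct consequence of Lemma~\ref{lem3.10}); your extra care with the iterated application of Lemma~\ref{lem3.10} via $V_S^{j}(U_{iS})\subseteq M^0_S$ and with the possibly reducible fibres $N_{iT}\times_T s$ fills in exactly the details the paper leaves implicit.
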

\begin{proof}
 By definition, the restriction of $V$  on $N$ is a rational dominant self-map. As $V_S$ is is a good model, $V_S: N_S^1\to N_S$ is a well defined dominant map, and $N^1_S\subset N_S$ is dense.  Thus there is a nontrivial open $S_a\subset S$ and a dense subscheme $N^a_{S_a}\subset N^1_{S}$, such that the composite $V_{S_a}^a=V_{S_a}\circ \ldots V_{S_a}$ ($a$-times) is well defined and dominant. So there is a nontrivial open $T\subset S_a$ such that for all closed points $t\in T$, the restriction $V_{T}^a\times_T t: N^a_T\times_T t\to N_T\times_T t=N_t$ is well defined and is dominant.  By the choice of $a$, $V_T^a$ respects the components $V_{iT}$. This shows i). As for ii), this is then a direct consequence of Lemma \ref{lem3.10}.
This finishes the proof. 
\end{proof}
\begin{defn} \label{defn3.12}
Let $X$ be a smooth projective variety defined over a perfect   field $k$ of characteristic $p>0$.
   Let $M$ be the moduli of $\chi$-stable  sheaves of rank $r$ and Hilbert polynomial $p_{\sO_X}$. 
\begin{itemize}
\item[1)]A closed point $[E]\in M$ is called a {\it torsion point} if there is a $m\in \N\setminus \{0\}$ such that $(F^m)^*E\cong E$. 
\item[2)] Let  $S$ be a smooth affine variety defined over $\F_q$ such that $X$ has a smooth projective model $X_S$, and let $M_S$  Langer's quasi-projective moduli $S$-scheme. Let $u\in M_S$ be a closed point, mapping to the closed point $s\in S$.   Then $u$ is said to be  a {\it torsion point} if  $u$ is a torsion point viewed as a closed point in $M_s$.  
\end{itemize}
\end{defn}
\begin{rmk} \label{rmk3.13}
 If  $u\in M_S$ is a torsion point mapping to $s\in S$, then the torsionfree sheaf $E$ on $X_s$ it corresponds to  has a stratification $E=E_0, E_1=(F^{N-1})^*E,\ldots , E_N=E, E_{N+1}=E_1, \ldots$. It follows by Katz' theorem \cite[Theorem~1.3]{G} that $E=(E_n)_{n\in \N}$ is a  stratified bundle (that is, all the $E_n$ are locally free).   Furthermore, all $E_n$ are stable of slope $0$, so all $E_n$ define modular points in $M_s$.
\end{rmk}

\begin{thm}  \label{thm3.14}
Let $X$ be a smooth projective variety defined over a perfect field $k$ of characteristic $p>0$. Then $M$ be the moduli of $\chi$-stable torsionfree sheaves of rank $r$. Let $N\subset M_{{\rm red}}$ be a Verschiebung divisible subscheme, and let $N_S$ be a good model of $N$. Then the torsion points of $N_S$ are dense in $N_S$.  
\end{thm}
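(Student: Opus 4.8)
\emph{Strategy.} The plan is to reduce the statement to an irreducible component of a single closed fibre $N_t$, present the relevant power of the Verschiebung there as an algebraic correspondence, apply Hrushovski's theorem on Frobenius‑periodic points, and convert those periodic points into torsion points using the universal corepresentability of $M_S$ together with Lemma \ref{lem3.10}. By Lemma \ref{lem3.7} fix $a\in\N\setminus\{0\}$ with $V|_N^a$ stabilizing every irreducible component of $N$, take the good model $N_S$ and the open $T\subset S$ of Corollary \ref{cor3.11}. For a closed point $t\in T$ the self-map $V_t^a$ permutes the finitely many irreducible components of each $N_{iT}\times_T t$; since the number of geometric components of these fibres is constant on a dense open of $T$, after replacing $a$ by a uniform multiple and shrinking $T$ we may assume that for all closed $t\in T$ and all $i$, $V_t^a$ stabilizes each irreducible component $Y$ of $N_{iT}\times_T t$ and restricts to a \emph{dominant} rational self-map of $Y$ (using Corollary \ref{cor3.11}(i) and that a rational map sends a component into a component). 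Enlarging $\kappa(t)=\F_{q(t)}$ to a finite extension $\F_{q'}$, the component $Y$ and the Zariski closure $\Gamma\subset Y\times_{\F_{q'}}Y$ of the graph of $V_t^a|_Y$ are defined over $\F_{q'}$; the first projection $\Gamma\to Y$ is birational and the second is dominant.

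Next, apply \cite[Corollary~1.2]{H} to $\Gamma$ with the $q'$-power geometric Frobenius $\Phi_{q'}$ of $Y$: the closed points of the form $(u,\Phi_{q'}^m(u))\in\Gamma$, $m\ge1$, are dense in $\Gamma$; pushing forward by the birational first projection and intersecting with the dense open of $Y$ where $V_t^a|_Y$ is a morphism equal to $V_t^a$ (Corollary \ref{cor3.11}(ii)), the corresponding $u$ form a dense subset of $Y$ with $V_t^a(u)=\Phi_{q'}^m(u)$. Writing $u=[E]$: by Lemma \ref{lem3.10} and the definition of the Verschiebung, $V_t^a(u)=[(F^a)^{*}E]$ with $F$ the absolute Frobenius of $X_t\otimes\bar{\F}_q$ and $(F^a)^{*}E$ $\chi$-stable; by the universal corepresentability of $M_S$, $\Phi_{q'}^m(u)=[E^{\sigma^m}]$ is the class of the $m$-th iterated $q'$-power Galois conjugate of $E$, where $\sigma$ is the $q'$-power Frobenius automorphism of $\bar{\F}_q$ over $\F_{q'}$. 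Hence $(F^a)^{*}E\cong E^{\sigma^m}$. The point $u$ is defined over a finite extension $\F_{q''}\supset\F_{q'}$, so $\sigma^{\ell}$ fixes $[E]$ for $\ell=[\F_{q''}:\F_{q'}]$; since the absolute Frobenius of $X_t\otimes\bar{\F}_q$ commutes with $\sigma$, pullback by $F$ commutes with Galois conjugation, and iterating $(F^a)^{*}E\cong E^{\sigma^m}$ gives $(F^{ka})^{*}E\cong E^{\sigma^{km}}$ for all $k$; taking $k=\ell$ yields $(F^{\ell a})^{*}E\cong E$, so $u$ is a torsion point in the sense of Definition \ref{defn3.12}. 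Therefore torsion points are dense in every $N_{iT}\times_T t$, hence in $N_t$, for every closed $t\in T$.

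Finally, since $N_S$ is a model of $N$, every irreducible component of $N_S$ dominates $S$, so any nonempty open $U\subset N_S$ has image containing a dense open of $S$ meeting $T$; hence $U\cap N_t$ is a nonempty open of $N_t$ for some closed $t\in T$ and contains a torsion point by the previous paragraph, which is a torsion point of $N_S$ by Definition \ref{defn3.12}(2). This gives the theorem.

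The main obstacle I expect is the conversion step: one must correctly relate the \emph{geometric} Frobenius $\Phi_{q'}$ of the moduli space — which Galois‑conjugates the parametrized sheaf — to the Verschiebung $V_t$ — which pulls back along the \emph{absolute} Frobenius of $X_t$ — and this compatibility is exactly what the universal corepresentability of Langer's $M_S$ and Lemma \ref{lem3.10} are for; additional care is needed for the bookkeeping in the reduction (reducibility of the fibres $N_{iT}\times_T t$ and the fields of definition of their irreducible components) so that Hrushovski's theorem may legitimately be applied fibrewise.
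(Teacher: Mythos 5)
Your proposal is correct and follows essentially the same route as the paper: reduce to an irreducible component of a closed fibre $N_t$, apply Hrushovski's theorem \cite[Corollary~1.2]{H} to the closure of the graph of a suitable power of the Verschiebung, and then upgrade the resulting relation $V_t^a(u)=\Phi_{q'}^m(u)$ to genuine torsion by iterating and using that $u$ is a closed point (the paper phrases this via the commutation $\Phi_q\circ V_s=V_s\circ\Phi_q$ on $M_s$, you via Galois conjugates of the sheaf $E$ — the same computation). The only cosmetic differences are your uniformization of the stabilizing power $a$ over all of $T$ (the paper simply takes a further power $b$ at a single chosen closed point) and your more explicit final density argument over $S$; note also that the paper passes to a dense \emph{affine} open $Y\subset Y'$ before invoking Hrushovski, a small technical point worth keeping.
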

\begin{proof}
Our goal is to show that if $Z\subset N_S$ is any strict closed subscheme, then $N_S\setminus Z$ contains torsion points. So it is enough to show that $N_T\setminus Z$ contains torsion points, where $T$ is defined in Corollary 
\ref{cor3.11}. 
   Let $s_0\in T$ be a  closed point  and  let $q(s_0)$ be the cardinality of the residue field $\kappa(s_0)$. We want to show that there are  torsion points in $N_{T}\times_T s_0\setminus (Z\cap N_{T}\times_T s_0)$. \\[.2cm]
We apply Hrushovski's theorem \cite[Corollary~1.2]{H}:  let $i \in \{1,\ldots, \rho\}$. This fixes the component $N_{iT}$ we  consider. 
 We define $ \kappa$ to be a finite extension of $\kappa(s_0)$ of cardinality $q$,  so that  the irreducible components of $N_{is}:=N_{iT}\otimes_T \kappa $ are defined over $\kappa$. Here $s=s_0\otimes_{\kappa(s_0)} \kappa$. We pick one such irreducible component of maximal dimension, say $Y'$. Since $V^a_s$  stabilizes $N_{is}$, the same argument as in Lemma \ref{lem3.7} implies that $V^{ab}_s $  stabilizes  $Y'$ for some $b\in \N\setminus \{0\}$. 
We choose 
an open dense affine subvariety $Y\subset Y'$. Then $Y$ is irreducible over $\F_{q}$.  Let $\Gamma \subset Y\times_s Y$ be the Zariski closure of the 
graph of $\Psi:=V_s^{ab}$ where it is well defined.  Then the first projection $\Gamma\to Y$ is birational (and therefore dominant) and the second projection $\Gamma  \to Y$ is dominant as 
$\Psi$
is dominant. Let $\Phi_{q}: Y\otimes_{\kappa} \bar \F_{q} \to  Y\otimes_{\kappa} \bar \F_{q}$ be the Frobenius raising coordinates to the $q$-th power. Hrushovski's theorem {\it loc. cit.} asserts that for any proper closed subvariety $W\subset Y$, there is a closed point $u\in Y\setminus W$, thus that for a suited $m\in \N\setminus \{0\}$ large enough, $(u, \Phi_{q}^m(u))$ is a closed point of $\Gamma$. Taking $W$ to contain both the locus on which $\Psi$  is not defined, and $Z\cap Y$, 
we obtain that 
$(u, v)\in \Gamma$ if and only if $v=\Psi(u)$, thus $\Phi_{q}^m(u)=\Psi(u)$, and $u\notin Z\cap Y$.  On the other hand, one has $\Phi_{q} \circ V_s(u)= V_s\circ \Phi_{q} (u)$ as $V_s$ is defined over $\kappa$. Thus from $ \Phi_{q}^m(u)=V_s^{ab}(u)$, we deduce that $ \Phi_{q}^{mc}(u)=V_s^{abc}(u)$ for all $c\in \N\setminus \{0\}$. As $u \in T$ is a closed point, there is a  $c$ such that $\Phi_{q}^{mc}(u)=u$. We deduce that for all proper 
closed subvarieties $W\subset Y$
containing the indeterminacy locus of $\Psi$ and $Z\cap Y$, there is a closed point $u\in Y\setminus W$ and a natural number $d\in \N\setminus \{0\}$, such that  $u= V_s^{d}(u)$. 
Thus $u$ is a torsion point. This finishes the proof.

\end{proof}

\begin{thm} \label{thm3.15}
 Let $X$ be a smooth connected projective variety defined over a perfect field $k$ of characteristic $p>0$. Let $\bar x\to X$ be a geometric point. If $\pi_1^{{\rm \et}}(X \otimes_k \bar k,\bar x)=\{1\}$,  there are no nontrivial stratified bundles. 
\end{thm}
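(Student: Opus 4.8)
The plan is to feed the reduction of Section~2 into Theorem~\ref{thm3.14}; the only genuinely new work is a spreading-out argument producing a good model, and the classical identification of Frobenius-periodic bundles with finite \'etale coverings. First I would reduce to stable building blocks. Let $E=(E_n)_{n\in\N}$ be a stratified bundle of rank $r>0$; the goal is to show it is trivial. By Proposition~\ref{prop2.3} there is an $n_0$ such that $E(n_0)$ is a successive extension of stratified bundles $U=(U_n)_{n\in\N}$ all of whose terms are $\mu$-stable of slope $0$, hence $\chi$-stable of Hilbert polynomial $p_{\sO_X}$. Since ${\sf Strat}(X)$ is abelian, it suffices to prove each such block $U$ trivial: then $E(n_0)$ is a successive extension of ${\mathbb I}=(\sO_X,\sO_X,\dots)$ by itself, so trivial by Proposition~\ref{prop2.4} (applicable as $\pi_1^{\et}(X\otimes_k\bar k,\bar x)=\{1\}$ has no quotient $\Z/p$), and then $E$ is trivial because $E_n$ is a term of $E(n_0)$ for $n\ge n_0$ while $E_n\cong(F^{n_0-n})^*E_{n_0}$ is trivial for $n<n_0$. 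I would treat the cases $\rank(U)=1$ and $\rank(U)\ge2$ separately.

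For $\rank(U)=1$, write $U=(L_n)_{n\in\N}$. By Corollary~\ref{cor2.2} each $L_n\otimes_k\bar k$ lies in $\Pic^\tau(X\otimes_k\bar k)$; since $\pi_1^{\mathrm{ab}}(X\otimes_k\bar k,\bar x)=\{1\}$, for $\ell\ne p$ the $\ell$-adic Tate module of $\Pic^0(X\otimes_k\bar k)$ vanishes, so $\Pic^0(X\otimes_k\bar k)=0$ and $\Pic^\tau(X\otimes_k\bar k)$ is finite. Hence the $L_n\otimes_k\bar k$ take only finitely many values; choosing $a<b$ with $L_a\otimes\bar k\cong L_b\otimes\bar k$ and putting $m=b-a$, the relations $F^*L_{c+1}\cong L_c$ give $(F^m)^*(L_b\otimes\bar k)\cong L_b\otimes\bar k$, i.e.\ $(L_b\otimes\bar k)^{\otimes(p^m-1)}\cong\sO$, which defines a Kummer (hence \'etale) covering of $X\otimes_k\bar k$. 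That covering is trivial by hypothesis, so $L_b\otimes\bar k$ is trivial; running the same argument on every tail $\{L_n:n\ge N\}$ shows all $L_n\otimes_k\bar k$, hence all $L_n$, are trivial, so $U$ is trivial.

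Finally, suppose for contradiction that some block $U=(U_n)_{n\in\N}$ has $\rank(U)=r\ge2$, and let $M$ be Langer's moduli scheme of $\chi$-stable torsionfree sheaves of rank $r$ and Hilbert polynomial $p_{\sO_X}$. By Remark~\ref{rmk3.3} all $U_n$ are $\mu$-stable of slope $0$, so they lie in $M^s$ (Definition~\ref{defn3.2}); thus $M^s\ne\emptyset$, and $N:=N(U)\subset M_{\mathrm{red}}$, the subscheme spanned by the stratification (Definition~\ref{defn3.4}), is nonempty and, by Lemma~\ref{lem3.8}, Verschiebung divisible. I would then spread $X$, its absolute Frobenius, $M$, the finitely many irreducible components of $N$, and the rational self-map $V|_N$ out over a common smooth affine geometrically irreducible $\F_q$-variety $S$ with $\F_q(S)\subset k$, getting a good model $N_S$ (Definition~\ref{defn3.9}; this is routine spreading out, consistent with Lemma~\ref{lem3.10} and Corollary~\ref{cor3.11}). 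Theorem~\ref{thm3.14} says torsion points are dense in $N_S$, so as $N_S\ne\emptyset$ there is a torsion point $u\in N_S$ over a closed point $s\in S$ with finite residue field $\F_q$. By Remark~\ref{rmk3.13} the corresponding $\chi$-stable sheaf $E$ on $X_s$ is locally free of rank $r\ge2$ with $(F^m)^*E\cong E$ for some $m\ge1$. By the theorem of Lange and Stuhler $E$ is trivialised by a finite \'etale covering of $X_s$; base-changing to $\bar\F_q$, where $\pi_1^{\et}(X_s\otimes_{\F_q}\bar\F_q)=\{1\}$ forces that covering to be a disjoint union of copies of $X_s\otimes_{\F_q}\bar\F_q$, we get that $E\otimes_{\F_q}\bar\F_q$ is trivial, whence $E$ itself is trivial by Galois descent along $\Spec\bar\F_q\to\Spec\F_q$ (Lang's theorem) --- contradicting that $E$ is $\chi$-stable of rank $r\ge2$. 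The needed vanishing $\pi_1^{\et}(X_s\otimes_{\F_q}\bar\F_q)=\{1\}$ comes from Grothendieck's specialisation theorem \cite[Th\'eor\`eme~3.8]{SGA1}, since $X_S\to S$ is smooth projective, its geometric generic fibre is $X\otimes_k\bar k$ up to an extension of the algebraically closed ground field (which does not change $\pi_1^{\et}$ for proper varieties), and $\pi_1^{\et}(X\otimes_k\bar k,\bar x)=\{1\}$ surjects onto the geometric special fibre. Hence $M^s=\emptyset$ for every $r\ge2$, no block of rank $\ge2$ exists, and the previous paragraphs complete the proof.

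The deepest ingredient, Hrushovski's theorem on periodic points, is already packaged inside Theorem~\ref{thm3.14}, so the remaining work has two foci. One is the construction of a genuine good model $N_S$: one must spread out \emph{simultaneously} the irreducible components of $N$, the indeterminacy locus of $V|_N$, and the compatibility $V_s=V_S\times_S s$, so that the hypotheses of Theorem~\ref{thm3.14} are literally met --- technical, but exactly what Definitions~\ref{defn3.6} and \ref{defn3.9}, Lemma~\ref{lem3.10} and Corollary~\ref{cor3.11} are designed to supply. The other, which I expect to be the conceptual crux, is the Lange--Stuhler dictionary between Frobenius-periodic bundles and finite \'etale coverings (and the attendant descent of ``geometrically trivial'' to ``trivial'' over the finite residue field): this is the one place where the \'etale fundamental group of a characteristic $p$ variety actually enters, standing in for the group of finite type that is unavailable and that powered Grothendieck's original argument.
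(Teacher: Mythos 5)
Your proposal is correct and follows essentially the same route as the paper: reduce via Proposition~\ref{prop2.3} to $\mu$-stable blocks of slope $0$, handle rank one through the finiteness of $\Pic^\tau(X\otimes_k\bar k)$ and Kummer coverings, and rule out blocks of rank $\ge 2$ by combining the density of torsion points in a good model (Theorem~\ref{thm3.14}) with Lange--Stuhler and Grothendieck's specialisation theorem, before concluding with Proposition~\ref{prop2.4}. The only cosmetic difference is the order of the steps (you decompose first and then kill each block, the paper first proves $M^s=\emptyset$ for each $r\ge2$); the invocation of ``Lang's theorem'' for descending triviality from $\bar\F_q$ to $\F_q$ is unnecessary --- geometric triviality already contradicts stability, or one argues as in Proposition~\ref{prop2.4} that $H^0(X_s,E)\otimes\sO\to E$ is an isomorphism after base change, hence an isomorphism.
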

\begin{proof} 
We consider rank $r\ge 2$ stratified bundles, as for rank 1, we gave the proof in the introduction. We want to show that $M^s$ is empty. This is equivalent to saying that the Zariski closure of $M^s$ in $M$ is empty, and is also equivalent to saying that any  subscheme $N$ of $M_{{\rm red}}$ spanned by stratifications is empty (see Remarks \ref{rmk3.5} 2)). Let $N_S$ be a good model of $N$. 
By Lemma \ref{lem3.8} together with Theorem \ref{thm3.14},  torsion points 
are dense in $N_S$. By Remark \ref{rmk3.13}, a torsion point $u\in N_T$ mapping to $s\in S$ represents in particular a vector bundle $E$ on $X_s$.  
By the theorem of Lange-Stuhler \cite[Satz~1.4]{LS}, 
  there is a (noncommutative) geometrically connected   \'etale finite  covering $h:Z\to X_s$,  such that $h^*E$ is trivial (beware that the other statement of {\it loc.cit}, asserting that for any vector bundle $E$,  an \'etale finite covering $h$ 
which trivializes $h^*E$ exists if and only if $ (F^m)^*E\cong E$ for some $m\in \N\setminus \{0\}$ is not correct,  
 although there are many ways to correct it). On the other hand, since $X_S\to S$ has good reduction and is proper, the specialization map $\pi_1(X_{\bar k})\to \pi_1(X_s\times_s \Spec \bar \F_q)$ is surjective
(\cite[Expos\'e~X,~Th\'eor\`eme~3.8]{SGA1}). The assumption implies then that  
$\pi_1(X_s\times_s \Spec \bar \F_q)=\{1\}$,
thus $h$ is the identity, and $E \simeq \oplus_1^r \sO_{X_s}$.  But $E$ has to be stable of rank $r\ge 2$. This is impossible. So we conclude that $N_S$ is empty, thus $N$ is empty. So there are no stratifed bundles  $E=(E_n)_{n\in N}$ with  $E_n$ $\mu$-stable of rank $r$ and slope zero, except $E=\mathbb{I}$. 
By Proposition \ref{prop2.3}, for any stratified bundle, $E(n_0)$ is a successive extension of $\mathbb{I}$ by itself for $n_0$ large enough. 
We apply Proposition \ref{prop2.4} to finish the proof over $k=\bar k$. If $k$ is perfect and not algebraically closed,    as already noticed in the proof of Proposition \ref{prop2.4}, a stratified bundle $E$ is trivial if and only if $E\otimes_k \bar k$ is trivial. This finishes the proof in general.
\end{proof}
In the remaining part of this section, we illustrate the theorem with two examples. The second one is due to M. Raynaud. \\[.1cm]
Let $X$ be a smooth projective variety defined over an algebraically closed field $k=\bar k$ of characteristic $p>0$. We know that if $k\neq \bar\F_p$, and if $H^1(X, \sO_X)_{\rm{ss}}$ has $k$-dimension at least $\ge 2$, one easily constructs an infinite family of extensions of $\mathbb{I}$ by itself in ${\sf Strat}(X)$ (see \cite[Proposition~2.9]{BK}). Furthermore, there are stratified bundles which are not semistable, so they can't 
be trivialized after a finite \'etale covering (\cite{G}). Thus the assumption on the finiteness of $\pi_1^{{\rm \et}}(X \otimes_k \bar k,\bar x)$ in the next corollary is really necessary. 
\begin{cor}  \label{cor3.16}
 Let $X$ be a smooth connected projective variety defined over an
algebraically closed field $k$ of characteristic $p>0$. Let $ x \in X(k)$ be a rational point. Assume $\pi_1^{{\rm \et}}(X, x)$ is finite. Then
\begin{itemize}
\item[i)] the surjective homomorphism 
$\pi^{{\rm strat}}(X,  x)\to 
\pi_1^{{\rm \et}}(X, x)$ induces an isomorphism on irreducible representations; 
\item[ii)] if  $\pi_1^{{\rm \et}}(X, x)$ has order prime to $p$, then the surjective homomorphism $\pi^{{\rm strat}}(X,  x)\to 
\pi_1^{{\rm \et}}(X, x)$ is an isomorphism; in particular, every stratified bundle is a direct sum of irreducible ones.

\end{itemize}
 
\end{cor}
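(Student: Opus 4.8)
The plan is to deduce the Corollary from Theorem~\ref{thm3.15} by passing to the finite universal \'etale cover of $X$. Write $G=\pi_1^{{\rm \et}}(X,x)$, a finite group by hypothesis, and let $\pi\colon (Y,\bar y)\to (X,x)$ be the connected finite \'etale Galois cover with group $G$, so that $\pi_1^{{\rm \et}}(Y,\bar y)=\{1\}$; since $\pi$ is finite \'etale and $X$ is smooth connected projective, so is $Y$, and as $k=\bar k$ one has $\pi_1^{{\rm \et}}(Y\otimes_k\bar k,\bar y)=\{1\}$ as well. I would first record the two formal ingredients. There is a tensor functor $\varepsilon\colon \Rep(G)\to {\sf Strat}(X)$ attaching to a representation $V$ the stratified bundle carried by the associated bundle $Y\times^G V$ with its canonical stratification; this is the natural comparison functor, which induces the homomorphism $\pi^{{\rm strat}}(X,x)\to \pi_1^{{\rm \et}}(X,x)$ of the statement. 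Moreover, finite \'etale descent along $\pi$ identifies ${\sf Strat}(X)$ with the category ${\sf Strat}(Y)^G$ of $G$-equivariant stratified bundles on $Y$: the absolute Frobenius commutes with every morphism, so $\pi^{*}$ preserves stratifications, and a descent datum for a locally free sheaf extends levelwise to the whole Frobenius tower. Under this identification $\mathbb{I}_X$ corresponds to $\mathbb{I}_Y$ with its tautological $G$-structure; and since $Y$ is geometrically connected and proper over the perfect field $k$, one has $H^0(Y,\sO_Y)=k$ and thence ${\rm End}_{{\sf Strat}(Y)}(\mathbb{I}_Y)=k$, a compatible tower of Frobenius pre-images of a scalar being uniquely determined because $k$ is perfect. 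This makes $\varepsilon$ fully faithful, via $\Hom_{{\sf Strat}(X)}(\varepsilon V,\varepsilon W)=\big(\Hom_k(V,W)\otimes_k{\rm End}_{{\sf Strat}(Y)}(\mathbb{I}_Y)\big)^{G}=\Hom_G(V,W)$.

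The decisive step is then immediate: given any stratified bundle $E=(E_n)_{n\in\N}$ of rank $r$ on $X$, its pullback $\pi^{*}E$ is a stratified bundle on $Y$, hence by Theorem~\ref{thm3.15} it is trivial, $\pi^{*}E\cong \mathbb{I}_Y^{\oplus r}$ in ${\sf Strat}(Y)$. Transporting the $G$-equivariant structure of $\pi^{*}E$ across such an isomorphism gives a $G$-structure on $\mathbb{I}_Y^{\oplus r}$, i.e.\ for each $g\in G$ an element of ${\rm Aut}_{{\sf Strat}(Y)}(\mathbb{I}_Y^{\oplus r})=GL_r(k)$ subject to the cocycle identity; since $g$ is a $k$-morphism it acts trivially on ${\rm End}_{{\sf Strat}(Y)}(\mathbb{I}_Y)=k$, so the cocycle is an honest homomorphism $\rho_E\colon G\to GL_r(k)$, and descent gives $E\cong\varepsilon(\rho_E)$. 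Hence $\varepsilon$ is essentially surjective, so an equivalence of tensor categories; in particular the homomorphism $\pi^{{\rm strat}}(X,x)\to\pi_1^{{\rm \et}}(X,x)$ is surjective.

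Both assertions now follow. For i): under $\varepsilon$ a stratified bundle is irreducible exactly when the corresponding $G$-representation is, and $\varepsilon$ fully faithful matches isomorphism classes, which is precisely the claim that $\pi^{{\rm strat}}(X,x)\to\pi_1^{{\rm \et}}(X,x)$ induces an isomorphism on irreducible representations. For ii): if $|G|$ is prime to $p$, Maschke's theorem makes $\Rep(G)$ semisimple, hence ${\sf Strat}(X)$ is semisimple and every stratified bundle is a direct sum of irreducible ones; and since $\varepsilon$ is an equivalence of neutral Tannakian categories compatible with $\omega_x$ (one checks $\omega_x\circ\varepsilon$ is the forgetful functor on $\Rep(G)$, the fibre of $Y\times^GV$ at the marked point being $V$), it induces an isomorphism $\pi^{{\rm strat}}(X,x)\xrightarrow{\ \sim\ }\pi_1^{{\rm \et}}(X,x)$.

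The genuinely hard input is Theorem~\ref{thm3.15}, used on the simply connected variety $Y$; everything else is Tannakian and descent bookkeeping. The point that needs care---and which I expect to be the subtle one---is the identification ${\rm End}_{{\sf Strat}(Y)}(\mathbb{I}_Y)=k$ together with the triviality of the $G$-action on it, since this is exactly what forces the transported descent datum to be an ordinary representation of $G$ rather than a twisted cocycle, and hence what makes $\varepsilon$ fully faithful and essentially surjective; one should also check that $Y$ is indeed again connected and projective so that Theorem~\ref{thm3.15} genuinely applies to it.
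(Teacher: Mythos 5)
Your proof is correct, and it reaches the corollary by a genuinely different route from the paper's. The paper works downstairs on $X$: writing $\mathbb{E}=h_*\mathbb{I}_Y$ for the object attached to the regular representation $k[G]$, it uses the adjunction unit $E\hookrightarrow h_*h^*E\cong H^0(Y,h^*E_0)\otimes_k\mathbb{E}$ (the triviality of $h^*E$ being Theorem \ref{thm3.15} applied to the universal cover $Y$, exactly the same hard input as in your argument), and then exploits that a substratified bundle of an object whose $\pi^{{\rm strat}}$-action factors through the finite quotient $G$ again factors through $G$; for ii) it invokes semisimplicity of $k[G]$. You instead go upstairs and descend: you identify ${\sf Strat}(X)$ with $G$-equivariant stratified bundles on $Y$, trivialize $\pi^*E$ by Theorem \ref{thm3.15}, and read the transported descent datum as a $1$-cocycle valued in ${\rm Aut}_{{\sf Strat}(Y)}(\mathbb{I}_Y^{\oplus r})=GL_r(k)$ with trivial $G$-action, hence as an honest representation $\rho_E$ with $E\cong\varepsilon(\rho_E)$. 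The two points you flag as delicate are indeed the load-bearing ones and both check out: $Y$ is connected, smooth, projective with trivial \'etale fundamental group, and ${\rm End}_{{\sf Strat}(Y)}(\mathbb{I}_Y)=k$ because a compatible tower of $p$-power roots of a scalar is unique over a perfect field (note only that the $k$-linear structure on ${\rm Hom}$-groups in ${\sf Strat}(Y)$ is the twisted one $\lambda\cdot(f_n)_n=(\lambda^{p^{-n}}f_n)_n$, which is harmless here). One remark: your descent argument shows that $\varepsilon\colon\Rep(G)\to{\sf Strat}(X)$ is an equivalence for \emph{any} finite $G$, hence that $\pi^{{\rm strat}}(X,x)\to\pi_1^{{\rm \et}}(X,x)$ is an isomorphism without the prime-to-$p$ hypothesis; that hypothesis is only used for the ``direct sum of irreducibles'' assertion via Maschke. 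This is stronger than what the corollary states, so you should flag it explicitly rather than fold it silently into ii); but I see no gap, and the same strengthening can in fact be extracted from the paper's own embedding $E\subset H^0(Y,h^*E_0)\otimes_k\mathbb{E}$, since the essential image of $\Rep(G)$ in ${\sf Strat}(X)$ is closed under subobjects.
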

\begin{proof}
Let $h: Y\to X$ be the universal cover based at $x$, so it is a Galois cover under $\pi_1^{{\rm \et}}(X,  x)$.
Let $\mathbb{I}_Y$ be the trivial stratified bundle on $Y$, and $\mathbb{E}=h_*\mathbb{I}_Y$ be its direct image. It is an object of ${\sf Strat}(X)$ which comes from the regular representation $k[\pi_1^{{\rm \et}}(X, x)]$ of $\pi_1^{{\rm \et}}(X, x)$.
 If $E=(E_n)_{n\in \N}$ is a stratified object, then by Theorem \ref{thm3.15}, $h^*E$ is trivial, thus $E\subset 
H^0(Y, h^*E_0)\otimes \E$. We first show i). If $E$ is irreducible as a stratified bundle, there is a projection of $H^0(Y, h^*E_0)$ to a $k$-line $\ell$ such that $E$ is still injective in $\ell\otimes_k \mathbb{E}$. Thus $E\subset \ell\otimes_k \mathbb{E}$ is a subrepresentation of $\pi^{{\rm strat}}(X,  x)$.
Since 
$\pi^{{\rm strat}}(X,  x)\to 
\pi_1^{{\rm \et}}(X, x)$ is surjective, $E \subset \ell\otimes_k \mathbb{E}$ 
is a subrepresentation of $\pi_1^{{\rm \et}}(X, x)$. This shows i). As for ii), $h$ then has degree prime to $p$, thus $\mathbb{E}$ is a direct sum of irreducible representations of $\pi^{{\rm \et}}(X, x)$, thus 
$E \subset H^0(Y, h^*E_0)\otimes_k \mathbb{E}$ as well, and is in particular 
a subrepresentation of $\pi^{{\rm \et}}(X,  x)$.
\end{proof}
We now illustrate Theorem \ref{thm3.14} in rank $2$ over $k=\bar \F_p$. To this aim, recall that if $E$ is a stratified bundle over $X$ projective smooth over $k=\bar k$, and if $x\in X(k)$, the Tannaka $k$-group ${\rm Aut}^{\otimes }(\langle E\rangle, x)\subset GL(E_0|_x)$ is also called the {\it monodromy group}. Recall further that if $[E]$ is a torsion point (see Definition \ref{defn3.12}) in $M$, then its monodromy group is in fact a finite quotient of $\pi^{{\rm \et}}(X,x)$, as $E$ is trivialized on the Lange-Stuhler finite \'etale covering $h$ ({\it loc. cit.}), so the monodromy group is a quotient of the Galois group of $h$. 
\begin{prop}[M.~Raynaud] \label{prop3.17}
 Let $X$ be a smooth projective variety defined over $k=\bar \F_p$. Let $E=(E_n)_{n\in \N}$ be a rank 2 stratified bundle with $E_n\in M$, the moduli of $\chi$-stable rank 2 torsionfree sheaves,  and with ${\rm det}(E)=\mathbb{I}$ (i.e. ${\rm det}(E_n)=\sO_X$ for all $n\ge 0$).  Let $N(E)$ be defined in Definition \ref{defn3.4}. We assume $N(E)$ irreducible of dimension $>0$.  Then
\begin{itemize}
\item[i)] either there is a dense subset of torsion points $a_i \in N(E)$ with monodromy group $SL(2, k_i) \subset GL(E_0|_x)$ where $k_i$ are finite subfields of $k$ of increasing order;
\item[ii)] or there is a dense subset of torsion points $a_i\in N(E)$ with monodromy group a dihedral group $D_{n_i}$ of order $2n_i$ with  increasing $n_i$.

\end{itemize}
One can characterize geometrically the second case: it happens precisely when there is an \'etale degree 2 covering $h: Y\to X$ and a rank one stratified bundle $L$ on $Y$ such that $h_*L=E$, or, equivalently, such that $h^*E$ becomes reducible: $h^*E\cong L\oplus L^\sigma$, where $\langle \sigma\rangle  $ is the Galois group of $h$ and  $L^\sigma$ is the Galois translate of $L$. 
\end{prop}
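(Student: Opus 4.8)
The plan is to combine Theorem~\ref{thm3.14} (density of torsion points in a good model of $N(E)$), the Lange--Stuhler description of the monodromy of a torsion point as a finite group, and Dickson's classification of the finite subgroups of $SL_2$ over $\bar\F_p$.

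\emph{Reduction to counting monodromy types.} Since $k=\bar\F_p$, the variety $X$, the polarization, the scheme $N(E)$ (irreducible by hypothesis) together with its Verschiebung map $V|_{N(E)}$ are all defined over a finite field $\F_q$; so $N(E)$ over $S=\Spec\F_q$ is a good model of the Verschiebung divisible subscheme $N(E)$ (Lemma~\ref{lem3.8}), and Theorem~\ref{thm3.14} yields that the torsion points of $N(E)$ are Zariski dense. Every point of $N(E)$ has determinant $\sO_X$, because $\det\colon M_{\mathrm{red}}\to\Pic(X)$ is constant equal to $[\sO_X]$ on the dense set $\{[E_n]\}$ (here $\det E=\mathbb{I}$). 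Hence a torsion point $[E']\in N(E)$ is a $\chi$-stable rank $2$ bundle with $\det E'=\sO_X$ and $(F^m)^*E'\cong E'$; by Remark~\ref{rmk3.13} and Lange--Stuhler \cite{LS}, $E'$ is trivialized on a connected finite étale cover, hence arises from a representation $\pi_1^{\et}(X,\bar x)\to SL(2,k)$ with finite image $G'=G(E')$, the monodromy group. The group $G'$ acts irreducibly on $k^2$: a $G'$-subrepresentation would give a rank-one sub-stratified-bundle $L\subset E'$, so $L$ would be a torsion line bundle, hence numerically trivial, whence $p_L=p_{\sO_X}=p_{E'}$ by Riemann--Roch, contradicting $\chi$-stability.

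\emph{The dichotomy (i)/(ii).} By Dickson, a finite irreducible subgroup of $SL_2(\bar\F_p)$ is, up to conjugacy, either binary dihedral (it lies in the normalizer of a torus, with dihedral image $D_n$ in $PGL_2$), or one of three exceptional binary polyhedral groups of order $\le 120$, or a group containing some $SL(2,\F_q)$ (on Dickson's list, $SL(2,\F_q)$ itself or its index-two $PGL(2,\F_q)$-extension). Because $X$ is projective, $\pi_1^{\et}(X,\bar x)$ is topologically finitely generated, so for each fixed finite group $G$ only finitely many torsion points of $N(E)$ have monodromy contained in $G$; hence all but finitely many torsion points of $N(E)$ are of dihedral type or of ``big'' ($SL(2,\F_q)$-)type, and their monodromy orders are unbounded. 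Since $N(E)$ is irreducible, one of these two sets of torsion points is dense; this is exactly the alternative (i)/(ii), and the same finiteness forces the fields $k_i$ (resp.\ the integers $n_i$) to grow.

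\emph{The geometric characterization, and the main obstacle.} I would attach to each $2$-torsion line bundle $\sM$ on $X$ (finitely many, since $H^1_{\et}(X,\Z/2)$ is finite) the involution $\tau_\sM\colon [G]\mapsto[G\otimes\sM]$ of $M_{\mathrm{red}}$; as $F^*\sM\cong\sM$ (for $p$ odd), it commutes with the Verschiebung, so its fixed locus is closed and Verschiebung-stable. If (ii) holds, then by Dickson each dihedral torsion bundle $E^{(i)}$ satisfies $E^{(i)}\otimes\sM_i\cong E^{(i)}$ for the $2$-torsion bundle $\sM_i$ cutting out its degree-$2$ cover; by pigeonhole on the finitely many $\sM$ and irreducibility of $N(E)$, a dense subset shares one $\sM$, so $N(E)\subset\mathrm{Fix}(\tau_\sM)$. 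In particular $E_n\otimes\sM\cong E_n$ for all $n$ with $[E_n]\in N(E)$, and using $F^*\sM\cong\sM$ this descends down the Frobenius tower $E_{n-1}=F^*E_n$ to $n=0$, giving $E_0\otimes\sM\cong E_0$. The classical structure theorem for a stable rank-$2$ bundle isomorphic to its twist by an order-$2$ line bundle then writes $E_0=h_*L_0$ for the degree-$2$ étale cover $h\colon Y\to X$ defined by $\sM$; the deck action decomposes $h^*E$ into rank-one stratified sub-bundles $L\oplus L^\sigma$, so $E=h_*L$ as stratified bundles and $h^*E\cong L\oplus L^\sigma$. Conversely, if $E=h_*L$ then $h^*\sM\cong\sO_Y$ gives $E_n\otimes\sM\cong E_n$ for all $n$, hence $N(E)\subset\mathrm{Fix}(\tau_\sM)$, so every torsion point $E'\in N(E)$ satisfies $E'\otimes\sM\cong E'$ and is a direct image from $h$, i.e.\ has dihedral monodromy --- case (ii). The hard part is this characterization, and inside it the implication ``dense dihedral torsion points $\Rightarrow$ $E$ descends from a degree-$2$ cover'': it relies on isolating a \emph{common} $2$-torsion twist, on the closedness of its fixed locus so that it contains all of $N(E)$, and on propagating the twist-invariance down the Frobenius tower to level $0$, where one invokes the classical structure theorem and checks that the resulting line bundle on $Y$ is a genuine stratified bundle. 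A secondary point is the bookkeeping (topological finite generation of $\pi_1^{\et}$ for $X$ projective) upgrading ``finitely many torsion points per finite monodromy group'' to the growth of $k_i$ and $n_i$; and the characteristic-$2$ case must be phrased directly via the degree-$2$ étale ($\Z/2$-torsor) cover rather than via $2$-torsion in $\Pic$.
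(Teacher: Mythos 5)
Your proof of the dichotomy i)/ii) follows essentially the same route as the paper: density of torsion points from Theorem \ref{thm3.14}, finite monodromy via Lange--Stuhler, irreducibility of the monodromy representation forced by stability, Dickson's classification of finite irreducible subgroups of $SL(2,\bar\F_p)$, and the fact that a fixed finite group accounts for only finitely many torsion points because there are only finitely many surjections from $\pi_1^{{\rm \et}}(X,\bar x)$ onto it, with irreducibility of $N(E)$ singling out one of the two dense families. Where you genuinely diverge is the geometric characterization of case ii). The paper works upstairs on $Y$: it forms $N(L)\subset\Pic^\tau(Y)$, uses properness of $\Pic^\tau(Y)$ to see that $h_*N(L)$ is closed and hence equals $N(E)$, which produces an $L$ with $h_*L=E$; conversely it fixes, by pigeonhole on the finitely many $\Z/2$-quotients of $\pi_1^{{\rm \et}}(X,x)$, a single double cover common to infinitely many dihedral points and pushes forward the resulting dense family of torsion line bundles. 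You instead work downstairs on $M$ with the twist involution $\tau_{\sM}$ by the $2$-torsion line bundle $\sM$ cutting out the cover: closedness of its fixed locus plus the same pigeonhole gives $N(E)\subset{\rm Fix}(\tau_{\sM})$, you propagate $E_n\otimes\sM\cong E_n$ down the Frobenius tower to level $0$, and you invoke the classical structure theorem for a stable rank-$2$ bundle fixed by a $2$-torsion twist to write $E_0=h_*L_0$. Both routes work. The paper's gets the needed closedness for free from properness of $\Pic^\tau(Y)$ and never needs the structure theorem; yours avoids introducing $N(L)$ altogether but must separately check that the resulting $L\subset h^*E$ is a sub-stratified bundle, that the twist-invariance of the moduli points upgrades to an isomorphism of stratified bundles (Gieseker's rigidity on proper $X$ is what makes this work), and must rephrase the $2$-torsion argument via $\Z/2$-torsors in characteristic $2$ --- all of which you correctly flag. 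Your remark that the ``dihedral'' subgroups of $SL(2,\bar\F_p)$ are really the binary dihedral ones is, if anything, more careful than the paper's own phrasing.
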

\begin{proof}
 Assume there if a degree 2 \'etale covering $h: Y\to X$ and a rank 1 stratified bundle $L$ on $Y$ such that $L\subset h^*E$.  Then $L^\sigma\subset h^*E$ and thus $h^*E \cong L \oplus L^\sigma$. Since ${\rm det}(E)=\mathbb{I}$, one has $L^\sigma\cong L^{-1}$. One has in particular  $h_*L_n=E_n$ for all $n\ge 0$. All $L_n$ lie in $\Pic^\tau(Y)$. 
We define $N(L)\subset \Pic^\tau(Y)$ as in Definition \ref{defn3.4}. Then the morphism $\Pic^\tau(Y)\to M, \sL\mapsto h_*\sL$ sends $N(L)$ to a closed subscheme of $M_{{\rm red}}$, as $\Pic^\tau(Y)$ is proper, which is contained in $N(E)$ by construction, but  which contains all the $E_n=h_*L_n$. Thus $h_*N(L)=N(E)$. The subscheme  $N(L)$ is the closure of an infinite family of torsion points $t_m$. Since $h_*t_m$ lies in $N(E)$, and since  the locus of $M$ on which the determinant is $\sO_X$ is closed, all points of $N(E)$ have determinant equal to $\sO_X$. In particular, one has  ${\rm det}(h_*(t_m))=\sO_X$. This implies that $t_m^\sigma\cong t_m^{-1}$ as  ${\rm det}(h^*h_*t_m) \cong {\rm det}(t_m\oplus t_m^\sigma)=\sO_Y$. 
 Let $y\in Y(k)$ mapping to $x\in X(k)$. The exact sequence $1\to \pi_1^{{\rm \et}}(Y, y)\to  \pi_1^{{\rm \et}}(X, x) \xrightarrow{h} \Z/2\to 0$ induces by pushout an exact sequence $1\to H\to D \xrightarrow{h} \Z/2\to 0$ for any finite quotient $\pi_1^{{\rm \et}}(Y, y)\surj H$. Taking for $H$ the cyclic quotients $\Z/n(m)\Z$ corresponding to the $t_m$ yields a stratified line bundle $\sL_m$ on $Y$, such that the generator $\sigma \in \pi_1^{\rm \et}(X,x)$ of  the automorphism group of $Y$ over $X$   acts via $x\mapsto -x$ on $\Z/n(m)\Z$. Thus $D$ is the dihedral group $D_{n(m)}$. Summarizing: if there is a $h: Y\to X$ such that $h^*E$ becomes strictly semistable, then points with monodromy group a dihedral group $D_{n(m)}$ of increasing order
$n(m)$  are dense in $N(E)$. (Note we do not need the irreducibility of $N(E)$ for this point). \\[.1cm]
Vice-versa, assume the points $\delta_m$ with dihedral monodromy $D_{n(m)}$ are dense in $N(E)$. Since $\pi^{{\rm \et}}(X,x)$ has only finitely many $\Z/2\Z$ quotients, there is an infinite sequence of such points $\delta_m$ such that the induced quotient $\pi^{{\rm \et}}(X,x) \to \Z/2\Z$ is fixed. Let $h: Y\to X$ be the corresponding covering and $y\in Y(k)$ mapping to $x$. The representation $\pi^{{\rm \et}}(X,x)\surj D_{n(m)}$ induces a representation $\pi^{{\rm \et}}(Y,y)\surj \Z/n(m)\Z$, defining  a stratified rank 1 bundle $\sL_m$ on $Y$, such that  $\sL_m^\sigma\cong \sL_m^{-1}$. Then the points $h_*\sL_m$ are dense in $N(E)$. Thus the Zariski closure $Z$ of the $\sL_m$ in $\Pic^\tau(Y)$ has the property that $h_*Z=N(E)$ by the argument we had before. Thus there is a $L\in Z$ such that $h_*L=E$. This finishes the characterization of the second case. \\[.1cm]
To show that one has either case i) or ii), one has to appeal to the classification of finite groups of $SL(2, \bar \F_p)$: one deduces from  Dickson's Theorem \cite[Hauptsatz~8.27]{Hup} that there are only 2 infinite families of finite subgroups of $SL(2, \bar \F_p)$ such that the natural rank 2 representation is geometrically irreducible, the dihedral subgroups $D_m$ and the finite subgroups $SL(2,  \F_{q_i})$. Arguing again that fixing a finite group $H$, there are finitely many surjections $\pi^{{\rm \et}}(X,x)\surj H$, one shows the dichotomy. 

\end{proof}

\section{Some remarks and questions}
\subsection{}\label{subs4.1} Using the effective Lefschetz properties \cite[Corollaire~3.4]{SGA2}, the property that stratified bundles uniquely lift to the formal neighbourhood of a smooth divisor \cite[Proposition~1.5]{G}, and the fact that coherent $\sD_X$-modules which are  $\sO_X$-coherent are locally free (\cite[Lemma~6]{dS}),
one shows that if $Y\subset X$ is a smooth ample divisor on a smooth projective variety defined over a perfect field of characteritic $p>0$, and if $y\in Y(k)$, then the homomorphism $\pi^{{\rm strat}}_1(Y, y)\to \pi_1^{{\rm strat}}(X,x)$ induced by the restriction of bundles is an isomorphism, if ${\rm dim}_k (Y)\ge 2$.  But in order to reduce the proof of the main theorem of this article to surfaces, one would need the full strength of Langer's theorem on corepresentatibility, which is hard to extract from Gieseker's article. So we did not present the argument via this reduction. 
\subsection{} \label{subs4.2} Simpson constructed in \cite{Simp} quasi-projective moduli schemes of semi-simple bundles with flat connection on a complex smooth projective variety. It is not unlikely that his ideas, combined with Langer's methods in characteristic $p>0$ (\cite{L}) and Proposition \ref{prop2.3},  yield the existence of quasi-projective moduli schemes of stratified bundles. Those moduli, aside of their own interest,  could then be used directly to prove Theorem \ref{thm1.1}.
\subsection{} \label{subs4.3} As mentioned in the introduction, Grothendieck's theorem over $\C$ has nothing to do with stability questions, and applies to the algebraic completion of the topological fundamental group of any smooth complex quasi-projective manifold. 
If we translate his theorem via the Riemann-Hilbert correspondence on $X$ smooth complex but not necessarily proper, then he shows that if the profinite completion  topological fundamental group is trivial, there is no nontrivial regular singular bundles with flat connection. 
The analog in characteristic $p>0$ over  a quasi-projective smooth variety $X$ would
relate  Grothendieck's \'etale fundamental group over $\bar k$ and tame stratified bundles. 
This would require a completely different proof. At any rate, we do not even have at disposal   a  theory of good lattices in $j_*E$, where $E$ is stratified on $X$ and $j:X \to \hat{X}$ is a normal  compactification. 
Unfortunately,  we can not say anything on this subject. 
\subsection{} \label{subs4.4} The proof of the main theorem may be seen as an application of Hrushovski's theorem. On one hand, it is very nice to see how his profound theorem works concretely for some natural question coming from algebraic geometry. On the other hand, to have another proof anchored in algebraic geometry would perhaps shed more light on the original problem. 
\bibliographystyle{plain}
\renewcommand\refname{References}

\end{document}